\definecolor{ddarkbrown}{rgb}{0.5,0.2,0.05} \definecolor{bbluegray}{rgb}{0.05,0,0.5}
\definecolor{puorange}{rgb}{0.80,0.20,0}
\definecolor{ddarkbrown}{rgb}{0.5,0.2,0.05} \definecolor{bbluegray}{rgb}{0.05,0,0.5}
\newtheorem{theorem}{Theorem}[section]
\newtheorem{proposition}[theorem]{Proposition}
\newtheorem{definition}[theorem]{Definition}
\newtheorem{lemma}[theorem]{Lemma}
\newtheorem{corollary}[theorem]{Corollary}
\newcommand{\BEAS}{\begin{eqnarray*}}
\newcommand{\EEAS}{\end{eqnarray*}}
\newcommand{\BEA}{\begin{eqnarray}}
\newcommand{\EEA}{\end{eqnarray}}
\newcommand{\BEQ}{\begin{equation}}
\newcommand{\EEQ}{\end{equation}}
\newcommand{\BIT}{\begin{itemize}}
\newcommand{\EIT}{\end{itemize}}
\newcommand{\BNUM}{\begin{enumerate}}
\newcommand{\ENUM}{\end{enumerate}}
\newcommand{\BA}{\begin{array}}
\newcommand{\EA}{\end{array}}
\newcommand{\reals}{{\mathbb R}}
\newcommand{\argmin}{\mathop{\rm argmin}}
\newcommand{\dom}{\mathop{\bf dom}}
\algnewcommand{\IIf}[1]{\State\algorithmicif\ #1\ \algorithmicthen}
\algnewcommand{\EndIIf}{\unskip\ \algorithmicend\ \algorithmicif}
\title{Sharpness, Restart and Acceleration}
\author{Vincent Roulet} 
\address{Department of Statistics,\vskip 0ex University of Washington, Seattle, USA.}
\email{vroulet@uw.edu}
\author{Alexandre d'Aspremont}
\address{CNRS \& D.I., UMR 8548,\vskip 0ex
\'Ecole Normale Sup\'erieure, Paris, France.}
\email{aspremon@ens.fr}
\begin{document} 
	\maketitle
	
	\begin{abstract}
		The {\L}ojasiewicz inequality shows that sharpness bounds on the minimum of convex optimization problems hold almost generically.  Sharpness directly controls the performance of restart schemes, as observed by \citet{Nemi85}. The constants quantifying these sharpness bounds are of course unobservable, but we show that optimal restart strategies are robust, in the sense that, in some important cases, finding the best restart scheme only requires a log scale grid search. Overall then, restart schemes generically accelerate accelerated first-order methods.
	\end{abstract}

	\section*{Introduction}
	We study\footnote{A subset of these results appeared at the NIPS 2017 conference under the same title.} convex optimization problems of the form
\BEQ\label{eq:optim_pb_intro}\tag{P}
\BA{ll}
\mbox{minimize} & f(x)
\EA
\EEQ
where $f$ is a convex
function defined on $\reals^n$. The complexity of these problems using first order methods is usually controlled by smoothness assumptions on $f$ such as Lipschitz continuity of its gradient.
Additional assumptions such as strong or uniform convexity provide respectively linear and faster polynomial rates of convergence \citep{Nest13a, Ioud14}. However, these assumptions are often too restrictive to be applicable. 
Here, we make a much more generic assumption that describes the growth of the function around its minimizers using constants $\mu>0$ and $r\geq 1$ such that
\BEQ\label{eq:KL}\tag{Loja}
\frac{\mu}{r} d(x,X^*)^r \leq f(x)-f^*, \quad \mbox{for every $x\in K$},
\EEQ
where $f^*$ is the minimum of $f$, $K\supset X^*$ is a given set and $d(x,X^*) = \min_{y\in X^*} \|x-y\|_2$ is the Euclidean distance from $x$ to the set  $X^*$ of minimizers of~$f$. This defines a \emph{lower bound} on the function around its minimizers and quantifies the sharpness of the minimum. We exploit this property using restart schemes on classical convex optimization algorithms.

The sharpness assumption~\eqref{eq:KL} is also known as a H\"olderian error bound on the distance to the set of minimizers. \citet{Hoff52} first introduced error bounds to study systems of linear inequalities. Natural extensions were then developed for convex optimization by \citet{Robi75,Manga85,Ausl88}, notably through the concept of sharp minimum \citep{Burk93,Burk02a}. But the most striking result in this vein is due to \citet{Loja63,Loja93} who proved that inequality~\eqref{eq:KL} holds generically for real analytic and subanalytic functions. This result has then been extended to non-smooth subanalytic convex functions by \citet{Bolt07}. Overall then, condition~\eqref{eq:KL} essentially measures the sharpness of minimizers, and holds generically. On the other hand, this inequality is purely implicit as $r$ or $\mu$ are neither observed nor known a priori, and deriving adaptive schemes is thus crucial to ensure practical relevance.

{\L}ojasiewicz inequalities either in the form of~\eqref{eq:KL} or as gradient dominated properties \citep{Poly63} led to new convergence results for composite problems and for alternating or splitting methods \citep{Atto10,Bolt14,Fran15,Kari16}. Here we use this inequality to produce accelerated rates for restart schemes.

Restart schemes have already been studied for strongly or uniformly convex functions in e.g. \citep{Nemi85,Nest13b,Ioud14,Lin14}. In particular, \citet{Nemi85} link a ``strict minimum'' condition akin to~\eqref{eq:KL} with faster convergence rates using restart schemes which form the basis of our results, but they do not study the cost of adaptation and do not tackle the non-smooth case. In a similar spirit, weaker versions of this strict minimum condition were used more recently to study the performance of restart schemes in \citep{Rene14,Freu15,Roul15}. 

The fundamental question regarding restart schemes is to define when to restart. Several heuristics have been presented that used some criterion on the iterates to  restart the accelerated algorithm and speed up convergence~\citep{Odon15,Su14,Gise14}. However, they did not theoretically establish improved  complexity bounds. The robustness of restart schemes was also studied by \citet{Ferc16} for quadratic error bounds, i.e.~\eqref{eq:KL} with $r=2$, satisfied by the LASSO problem for example. \citet{Ferc17} recently extended this work to produce adaptive restarts with theoretical guarantees of optimal performance, again for quadratic error bounds. In the same vein, \citet{Liu17} presented adaptive accelerated methods given H\"olderian error bounds, but their results are not adaptive to the exponent of the error bound. The references above focus on smooth problems, but error bounds appear also for non-smooth ones, with \citet{Gilp12} proving for example linear convergence of restart schemes in bilinear matrix games where the minimum is sharp, i.e.~\eqref{eq:KL} with $r=1$. Recently \citet{Rene18} presented simple generic schemes inspired by an early draft of this work, and provide adaptive schemes in all regimes (not only the smooth case).

Our contribution here is to derive optimal scheduled restart schemes for general convex optimization problems on smooth, non-smooth or H\"older smooth functions satisfying a sharpness assumption. We then show that for smooth functions these schemes can be made adaptive with nearly optimal complexity (up to a squared log term) for a wide array of sharpness assumptions. We also analyze restart schemes based on a sufficient decrease of the primal gap, when the optimal value of the problem is known. In that case, restart schemes are shown to be optimal without requiring a log scale grid search on the parameters. Our proofs only rely on having access to the convergence bound of an accelerated method, therefore our results are directly extended to the non-Euclidean case with composite objective and to non-smooth functions that can be smoothed.
	
	\section{Regularity Assumptions}
	\subsection{Smoothness}
Convex optimization problems~\eqref{eq:optim_pb_intro} are generally divided in two classes: smooth problems, for which $f$ has Lipschitz continuous gradients, and non-smooth problems for which $f$ is not differentiable. 
Following \citet{Nest15}, we use a unified framework that extends the definition of H\"older smooth functions.
\begin{definition}\label{def:holder}
A function  $f$ is $s$-smooth for given $1\leq s\leq 2$ if there exists a constant $L>0$ such that
\BEQ\label{eq:smooth}\tag{H\"older}
\|\nabla f(x)-\nabla f(y) \|_2 \leq L \|x-y\|_2^{s-1}, \quad  \mbox{for all $x,y \in \dom f$}
\EEQ
and any subgradients $\nabla f(x) \in \partial f(x), \nabla f(y)  \in \partial f(y)$ of $f$ at $x, y$ respectively.
We write $\mathcal{H}_{s, L}$ the set of $s$-smooth functions with parameter $L$.
\end{definition}

For $s=2$, we retrieve the classical definition of smoothness \citep{Nest13a}. For $s=1$ we get a classical assumption made in non-smooth convex optimization, i.e. that sub-gradients of the function are bounded. For $s\in]1, 2[$ we get the definition of H\"older smooth functions.
We generalize our results for functions smooth with respect to a non-Euclidean norm in Section~\ref{sec:compobreg}.

\subsection{Sharpness, Error Bounds}\label{sec:Loja}
We study convex optimization problems whose objective satisfies a growth condition as defined below.
\begin{definition}
	A function $f$ satisfies a {\L}ojasiewicz growth condition on a set $K$ if there exist  constants $r\geq 1$, $\mu>0$, such that
	\BEQ\tag{Loja}
	\frac{\mu}{r} d(x,X^*)^r \leq f(x)-f^*, \quad \mbox{for every $x\in K$},
	\EEQ
	where $f^*$ is the minimum of $f$, $d(x,X^*) = \min_{y\in X^*} \|x-y\|_2$ is the Euclidean distance from $x$ to the set  $X^*$ of minimizers of~$f$.
	We write $\mathcal{L}_{r, \mu}(K)$ the set of functions satisfying a {\L}ojasiewicz growth condition on a set $K$ with  parameters $r\geq 1$, $\mu>0$.
\end{definition}
Condition~\eqref{eq:KL} holds almost generically, and is notably satisfied by analytic and subanalytic functions (see~\citep{Bolt15} for more details). However, the proof (see e.g. \citet[Theorem 6.4]{Bier88}) uses topological arguments that are far from constructive. Hence, outside of some particular cases (e.g. strong convexity), we cannot assume that the constants in~\eqref{eq:KL} are known, even approximately.

Error bounds are directly related to a {\L}ojasiewicz inequality bounding the magnitude of the gradient~\citep{Bolt15}. These properties underlie many recent results in optimization \citep{Atto10,Fran15,Bolt14}. Here, the sharpness condition in~\eqref{eq:KL} allows us to accelerate convex optimization algorithms using restart schemes. 

Our analysis relies on the condition that~\eqref{eq:KL} is satisfied for any output of the algorithms we restart. By enforcing monotonicity of the objective values produced by those algorithms, this reduces to assume that~\eqref{eq:KL} is satisfied on sublevel sets of the objective. 

\subsection{Sharpness and Smoothness} \label{sec:sharp_smooth}
Given a convex function $f \in \mathcal{H}_{s, L}$, by using its Taylor expansion and the smoothness property, we get
$
f(x) \leq f^*+ \frac{L}{s}\|x-y\|_2^s,
$
for $x \in \dom f$ and $y \in X^*$. Setting $y$ to be the projection of $x$ onto $X^*$, this yields the following {\em upper bound} on suboptimality
\BEQ\label{eq:lb-s}
f(x) - f^* \leq \frac{L}{s} d(x,X^*)^s.
\EEQ
Now, assume moreover that $f \in \mathcal{L}_{r, \mu}(K)$ for a  given set $K$ such that $ X^* \subset K \subset \dom f$.
Combining~\eqref{eq:lb-s} and~\eqref{eq:KL} leads to
\[
\frac{s\mu}{rL} \leq d(x,X^*)^{s-r},
\]
for every $x \in K \setminus X^*$. This means that necessarily $s\leq r$ by taking $x$ close enough to $X^*$.
Moreover if $s<r$, the set $K$ must satisfy $\sup_{x \in K} d(x, X^*) <+\infty$.

For the following, we define
\BEQ\label{eq:kappa_tau}
\kappa \triangleq \frac{L^\frac{2}{s}}{\mu^\frac{2}{r}} \qquad \mbox{and} \qquad  \tau \triangleq 1-\frac{s}{r} \in [0,1)
\EEQ
a generalized condition number for the function $f$ and a condition number based on the ratio of powers in inequalities~\eqref{eq:smooth} and~\eqref{eq:KL}, respectively. Note that if $r=s=2$, $\kappa$ matches the classical condition number of the function.
	
	\section{Scheduled Restarts for Smooth Convex Problems}\label{sec:scheduled}
	In this section, we seek to solve \eqref{eq:optim_pb_intro} assuming that the function $f$ is smooth, i.e. satisfies \eqref{eq:smooth} with $s=2$ and $L>0$. Without further assumptions on $f$, an optimal algorithm to solve the smooth convex optimization problem~\eqref{eq:optim_pb_intro} is Nesterov's accelerated gradient method \citep{Nest83}. Given an initial point $x_0$, this algorithm outputs, after $t$ iterations, a point
\BEQ\label{eq:algobound_s=2}
x = \mathcal{A}(x_0,t) \qquad \mbox{such that} \qquad f(x) - f^* \leq  \frac{cL}{t^2} d(x_0,X^* )^2,
\EEQ
where $c>0$ is a universal constant (whose value will be allowed to vary in what follows, with $c=4$ here). The accelerated algorithm can be enforced to output solutions whose objective decays monotonically as detailed in Appendix~\ref{sec:algos}. Consequently, if $f$ satisfies a {\L}ojasiewicz growth condition on the initial sub-level set $K = \{x: f(x) \leq f(x_0)\}$, then it is satisfied for any point output by the algorithm. 

Note that the arguments that we develop below are not specific to the algorithm of~\citet{Nest83} and would apply to any method satisfying the complexity bound~\eqref{eq:algobound_s=2} as shown for example in Section~\ref{sec:compobreg} that generalizes the results to the non-Euclidean setting.
We now describe a restart scheme exploiting the extra regularity~\eqref{eq:KL} to improve the computational complexity of solving problem~\eqref{eq:optim_pb_intro} using accelerated methods.

\subsection{Scheduled restarts}\label{ssec:scheduled_restarts}
Here, we schedule the number of iterations $t_k$ made by the accelerated gradient algorithm between restarts, with $t_k$ being the number of (inner) iterations at the $k^\mathrm{th}$ algorithm run (outer iteration). Our scheme is described in Algorithm~\ref{algo:scheduled_s=2} below.
\begin{algorithm}[H]
	\caption{Scheduled restarts for smooth convex minimization \label{algo:scheduled_s=2}}
	\begin{algorithmic}
		\State{\textbf{Inputs :} $x_0\in\reals^n$ and a sequence $t_k$ for $k=1,\ldots,R$.}
		\For{$k=1,\ldots,R $}
		\State
		\vspace*{-0.3cm}
		\[
		x_k := \mathcal{A}(x_{k-1},t_k) 
		\]
		\vspace*{-0.5cm}
		\EndFor
		\State{\textbf{Output :} $\hat{x} := x_R$}
	\end{algorithmic}
\end{algorithm}
The analysis of this scheme and the following ones rely on two steps. We first choose schedules that ensure linear convergence of the objective values $f(x_k)$ w.r.t. $k$ at a given rate. We then adjust this linear rate to minimize complexity, i.e. the total number of inner iterations.
We begin with a technical lemma which assumes linear convergence holds, and connects the growth of~$t_k$, the precision reached and the total number of inner iterations $N$.
\begin{lemma}\label{lem:total_number}
	Let $x_k$ be a sequence whose $k\textsuperscript{th}$ iterate is generated from the previous one by an algorithm that runs $t_k$ iterations and write $N = \sum_{k=1}^R t_k$ the total number of iterations to output a point $x_R$. Suppose setting $t_k = Ce^{\alpha k}$, $(k=1,\ldots,R)$ for some $C>0$ and $\alpha\geq 0$ ensures that the outer iterations satisfy
	\BEQ \label{eq:conv_obj}
	f(x_k) -f^* \leq \nu e^{-\gamma k},
	\EEQ
	for all $k\geq 0$ where $\nu\geq 0$ and $\gamma\geq 0$. Then, precision at the output is given by,
	\[
	f(x_R) -f^* \leq \nu\exp(-\gamma  N/C),\quad \mbox{when $\alpha=0$,}
	\]
	and
	\[
	f(x_R) -f^* \leq \frac{\nu}{\left(\alpha e^{-\alpha} C^{-1}N +1\right)^\frac{\gamma}{\alpha}},\quad \mbox{when $\alpha>0$.} 
	\]
\end{lemma}
\begin{proof}
	When $\alpha=0$, $N = RC$, and inserting this in \eqref{eq:conv_obj} at the last point $x_R$ yields the desired result. On the other hand, when $\alpha>0$, we have $N = \sum_{k=1}^R t_k = C e^{\alpha} \frac{e^{\alpha R}-1}{e^{\alpha}-1}$, which gives
	$
	R = \log\left(\frac{e^{\alpha}-1}{e^{\alpha} C}N+1\right)/ \alpha.
	$
	Inserting this in \eqref{eq:conv_obj} at the last point, we get
	\[
	f(x_R) - f^*  \leq  \nu \exp \left(-\frac{\gamma}{\alpha} \log\left(\frac{e^{\alpha}-1}{e^{\alpha} C}N+1\right)\right) 
	\leq  \frac{\nu}{\left(\alpha e^{-\alpha} C^{-1}N+1\right)^\frac{\gamma}{\alpha}},
	\]
	where we used $e^x-1\geq x$. This yields the second part of the result.
\end{proof}
The last approximation in the case $\alpha>0$ simplifies the analysis that follows, without significantly affecting the bounds. We also show in Appendix~\ref{sec:total_number_approx} that using integer values $\tilde{t}_k = \lceil t_k \rceil$ does not significantly affect the bounds above. 

We now analyze restart schedules $t_k$ that ensure linear convergence. Our choice of $t_k$ will heavily depend on the ratio between $r$ and $s$ (with $s=2$ for smooth functions here), measured by $\tau=1-s/r$ defined in~\eqref{eq:kappa_tau}. Below, we show that if $\tau=0$, a constant schedule is sufficient to ensure linear convergence. When $\tau>0$, we need a geometrically increasing number of iterations for each cycle. 
\begin{proposition}\label{th:sched}
	Let $f$ be a convex function and $x_0 \in \dom f$. Denote $K = \{x: f(x) \leq f(x_0) \}$ and assume that $f \in \mathcal{H}_{2, L} \cap \mathcal{L}_{r, \mu}(K)$.
	Run Algorithm~\ref{algo:scheduled_s=2} from $x_0$ with iteration schedule $t_k=C^*_{\kappa,\tau}e^{\tau k}$, for $k=1,\ldots,R$,
	where 
	\BEQ\label{def:C}
	C^*_{\kappa,\tau} \triangleq e^{1-\tau}(c\kappa)^\frac{1}{2}(f(x_0)-f^*)^{-\frac{\tau}{2}},
	\EEQ
	with $\kappa$ and $\tau$ defined in~\eqref{eq:kappa_tau} and $c = 4e^{2/e}$ here. The precision reached at the last point $\hat{x}$ is given by,
	\begin{align}\label{eq:conv_s=2_r=2_opt}
	f(\hat{x}) - f^* \leq \exp\left(-2e^{-1}(c\kappa)^{-\frac{1}{2}} N\right)(f(x_0)-f^*) = O\left(\exp(-\kappa^{-\frac{1}{2}} N)\right),  \quad \mbox{when $\tau =0$,}
	\end{align}
	while,
	\begin{align}\label{eq:conv_s=2_r>2_opt}
	f(\hat{x}) -f^* \leq \frac{f(x_0)-f^*}{\left(\tau e^{-1}(f(x_0)-f^*)^{\frac{\tau}{2}}  (c\kappa)^{-\frac{1}{2}}N +1\right)^\frac{2}{\tau}} = O\left(N^{-\frac{2}{\tau}}\right), \quad \mbox{when $\tau >0$,}
	\end{align}
	where $N=\sum_{k=1}^R t_k$ is the total number of iterations.
\end{proposition} 
\begin{proof} Our strategy is to choose $t_k$ such that the objective is linearly decreasing, i.e. 
	\BEQ\label{eq:conv_delta_f_s=2}
	f\left(x_k\right)-f^* \leq e^{-\gamma k} (f(x_0)-f^*),
	\EEQ
	for some $\gamma\geq 0$ depending on the choice of $t_k$. 
	This directly holds for $k=0$ and any $\gamma\geq 0$. Combining~\eqref{eq:KL} with the complexity bound in \eqref{eq:algobound_s=2}, we get 
	\BEQ\label{eq:shrp-complx}
	f\left(x_k\right)-f^* \leq  \frac{c \kappa}{t_k^2}(f\left(x_{k-1}\right)-f^*)^\frac{2}{r},
	\EEQ
	where $c  = 4 e^{2/e} $ using that $r^{2/r} \leq e^{2/e}$.
	Assuming recursively that \eqref{eq:conv_delta_f_s=2} is satisfied at iteration $k-1$ for a given $\gamma$, we have 
	\[
	f\left(x_k\right)-f^* \leq \frac{c \kappa  e^{-\gamma\frac{2}{r}(k-1)}}{t_k^2}(f(x_0)-f^*)^\frac{2}{r},
	\]
	and to ensure \eqref{eq:conv_delta_f_s=2} at iteration $k$, we impose
	\[
	\frac{c\kappa e^{-\gamma \frac{2}{r}(k-1)}}{t_k^2}(f(x_0)-f^*)^\frac{2}{r} \leq e^{-\gamma k}(f(x_0)-f^*).
	\] 
	Rearranging terms in this last inequality, using $\tau$ defined in \eqref{eq:kappa_tau}, we get 
	\BEQ\label{eq:cond_for_conv_s_2}
	t_k \geq e^{ \frac{\gamma(1-\tau)}{2}} (c \kappa)^{\frac{1}{2}}  (f(x_0)-f^*)^{-\frac{\tau}{2}} e^{\frac{\tau \gamma }{2}k}.
	\EEQ
	For a given $\gamma\geq 0$, we can set $t_k = C e^{\alpha k}$ where 
	\BEQ
	C =  e^{ \frac{\gamma(1-\tau)}{2}} (c \kappa)^{\frac{1}{2}} (f(x_0)-f^*)^{-\frac{\tau}{2}} \qquad \mbox{and} \qquad
	\alpha =  {\tau \gamma }/{2},   
	\EEQ
	and Lemma~\ref{lem:total_number} then yields, 
	\[
	f(\hat{x}) - f^* \leq  \exp\left(-\gamma e^{-\frac{\gamma}{2}}(c\kappa)^{-\frac{1}{2}} N\right)(f(x_0)-f^*),
	\]
	when $\tau =0$, while
	\[
	f(\hat{x})-f^* \leq  \frac{f(x_0)-f^*}
	{\left(\frac{\tau }{2}\gamma e^{-\frac{\gamma}{2}} (c\kappa)^{-\frac{1}{2}}(f(x_0)-f^*)^{\frac{\tau}{2}}N + 1
		\right)^\frac{2}{\tau}},
	\]
	when $\tau >0$. These bounds are minimal for $\gamma = 2$, which yields the desired result.
\end{proof}
When $\tau=0$, bound \eqref{eq:conv_s=2_r=2_opt} matches the classical complexity bound for smooth strongly convex functions~\citep{Nest13a}. When $\tau>0$ on the other hand, bound \eqref{eq:conv_s=2_r>2_opt} highlights a {\em faster convergence rate than accelerated gradient methods.} The sharper the function (i.e. the closer $r$ is to $2$), the faster the convergence. This matches the lower bounds for optimizing smooth and sharp functions functions  up to constant factors \citep[Eq. 1.21]{Nemi85}.
Also, setting $t_k = C^*_{\kappa,\tau}e^{\tau k}$ yields continuous bounds on precision, i.e. when $\tau \rightarrow 0$, bound \eqref{eq:conv_s=2_r>2_opt} converges to bound \eqref{eq:conv_s=2_r=2_opt}, which also shows that for $\tau$ near zero, constant restart schemes are almost optimal. 

Note that for $N\leq C^*_{\kappa,\tau}$, the bounds~\eqref{eq:conv_s=2_r=2_opt},~\eqref{eq:conv_s=2_r>2_opt} are not informative. Precisely, the lower bounds for this problem as presented in~\citep[Eq. 1.21]{Nemi85} are not informative for small $N$. In that case, the optimal rate is given by the accelerated scheme and consequently by Algorithm~\ref{algo:scheduled_s=2} before the first restart. 

\subsection{Adaptive scheduled restart}\label{ss:adapt}
The previous restart schedules depend on the sharpness parameters $(r,\mu)$ in~\eqref{eq:KL}. In general of course, these values are neither observed nor known a priori. Making the restart scheme  adaptive is thus crucial to its practical performance. Fortunately, we show below that a simple logarithmic grid search on these parameters is enough to guarantee nearly optimal performance.

We begin with the following Proposition that stems from the proof of Proposition~\ref{th:sched}.
\begin{proposition}\label{cor:rates}
	Let $f$ be a convex function and $x_0 \in \dom f$. Denote $K = \{x: f(x) \leq f(x_0) \}$ and assume that $f \in \mathcal{H}_{2, L} \cap \mathcal{L}_{r, \mu}(K)$.
	Run Algorithm~\ref{algo:scheduled_s=2} from $x_0$ with general schedules of the form 
	\[
	\left\{\BA{lll}
	t_k = C & \mbox{if $\tau = 0 $}, \\
	t_k = C e^{\alpha k} & \mbox{if $\tau > 0 $}.
	\EA\right.
	\]
	If $\tau =0$ and $C \geq C^*_{\kappa,0}$, then 
	\BEQ\label{eq:conv_s_2_gen_schedule_tau=0}
	f(\hat{x})-f^* \leq \left(\frac{c\kappa}{C^2}\right)^\frac{N}{C} (f(x_0)-f^*),
	\EEQ
	while, if $\tau >0$ and $C \geq C(\alpha)$, then
	\BEQ\label{eq:conv_s_2_gen_schedule_tau>0}
	f(\hat{x})-f^* \leq \frac{ f(x_0)-f^*}{\left(\alpha e^{-\alpha} C^{-1}N+1\right)^\frac{2}{\tau}},
	\EEQ
	where 
	\BEQ\label{eq:C_alpha}
	C(\alpha) \triangleq e^{ \frac{\alpha(1-\tau)}{\tau}} (c \kappa)^{\frac{1}{2}}  (f(x_0)-f^*)^{-\frac{\tau}{2}},
	\EEQ
	and $N=\sum_{k=1}^R t_k$ is the total number of iterations.
\end{proposition}
\begin{proof}
	Given general schedules of the form
	\[
	\left\{\BA{lll}
	t_k = C & \mbox{if $\tau = 0 $}, \\
	t_k = C e^{\alpha k} & \mbox{if $\tau > 0 $},
	\EA\right.
	\]
	the best value of $\gamma$ satisfying condition \eqref{eq:cond_for_conv_s_2} for any $k\geq 0$ in Proposition~\ref{th:sched} is given by
	\[
	\left\{
	\BA{ll}
	\gamma = \log\left(\frac{C^2}{c\kappa}\right) & \mbox{if $\tau = 0 $ and $C \geq C^*_{\kappa,0}$,} \\
	\gamma = \frac{2 \alpha}{\tau}  & \mbox{if $\tau > 0 $ and $C \geq C(\alpha)$.}
	\EA\right.
	\]
	As in Proposition~\ref{th:sched}, plugging these values into the bounds of Lemma~\ref{lem:total_number} yields the desired result.
\end{proof}
We run several schemes with a fixed number of inner iterations $N$ to perform a log-scale grid search on $\tau$ and $\kappa$. We define these schemes as follows.
\BEQ\label{algo:universal_s=2}
\left\{\BA{l}
\mathcal{S}_{i,0}:\mbox{Restart Algorithm~\ref{algo:scheduled_s=2} with $t_k = C_i$}, \\
\mathcal{S}_{i,j}:\mbox{Restart Algorithm~\ref{algo:scheduled_s=2} with $t_k = C_i e^{\tau_j k}  $,} \\
\EA
\right.
\EEQ
where $C_i = 2^i$ and $\tau_j = 2^{-j}$. We stop each of these schemes when the total number of its inner iterations has exceeded $N$, i.e. at the smallest $R$ such that $\sum_{k=1}^R t_k \geq N$. The size of the grid search in $C_i$ is naturally bounded as we cannot restart the algorithm after more than $N$ total inner iterations, so $i\in [1,\ldots,\lfloor \log_2 N \rfloor]$. We also show that when $\tau$ is smaller than $1/N$, a constant schedule where $t_k=C$ performs as well as the optimal geometrically increasing schedule where $t_k=C^*_{\kappa,\tau}e^{\tau k}$. This crucially means we can also choose $j \in [1,\ldots,\lceil \log_2 N \rceil ]$, hence limiting the cost of the grid search.

The following proposition details the convergence of this grid-search, using the same notations as in Proposition~\ref{th:sched}.
As observed at the end of Section~\ref{ssec:scheduled_restarts}, the optimal bounds~\eqref{eq:conv_s=2_r=2_opt},~\eqref{eq:conv_s=2_r>2_opt} are only informative after a sufficient number of iterations, which is why we analyze the adaptive scheme only for a number of iterations $N \geq 2 C^*_{\kappa, \tau}$. To get optimal bounds in all regimes, it suffices to run an additional non-restarted algorithm that will also capture the best rate in the case $N < 2 C^*_{\kappa, \tau}$. 
\begin{proposition}\label{prop:grid_search}
	Let $f$ be a convex function and $x_0 \in \dom f$. Denote $K = \{x: f(x) \leq f(x_0) \}$, assume that $f \in \mathcal{H}_{2, L} \cap \mathcal{L}_{r, \mu}(K)$ and denote by $N\geq 2C^*_{\kappa,\tau}$ a given number of iterations.

	Run schemes $\mathcal{S}_{i,j}$ defined in \eqref{algo:universal_s=2} to solve \eqref{eq:optim_pb_intro} for $i\in [1,\ldots,\lfloor \log_2 N \rfloor]$ and $j \in [0,\ldots,\lceil \log_2 N \rceil]$, stopping each time after $N$ total inner algorithm iterations, i.e. for $R$ such that $\sum_{k=1}^R t_k \geq N$.
	
	If $\tau = 0$, there exists $i \in [1,\ldots,\lfloor \log_2 N \rfloor ]$ such that the scheme $\mathcal{S}_{i,0}$ achieves a precision given by
	\[
	f(\hat{x}) -f^* \leq \exp\left(-e^{-1}(c\kappa)^{-\frac{1}{2}} N\right)(f(x_0)-f^*).
	\]
	
	If $\tau > 0$, there exist $i \in [1,\ldots,\lfloor \log_2 N \rfloor ]$ and $j \in [0,\ldots,\lceil \log_2 N \rceil]$ such that the scheme $\mathcal{S}_{i,j}$ achieves a precision given by
	\[
	f(\hat{x}) -f^* \leq  \frac{f(x_0)-f^*}{\left(\tau e^{-1}(c\kappa)^{-\frac{1}{2}} (f(x_0)-f^*)^{\frac{\tau}{2}}  (N-1)/4 +1\right)^\frac{2}{\tau}}.
	\]
	Overall, running the logarithmic grid search has a complexity $(\log_2 N )^2$ times higher than running $N$ iterations using the optimal (oracle) scheme.
\end{proposition}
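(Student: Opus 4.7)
The plan is to identify, for each case of the true exponent $\tau$, a grid point $(i, j)$ whose corresponding schedule approximates the provably optimal choice $t_k = C^*_{\kappa,\tau} e^{\tau k}$ of Proposition \ref{th:sched} within constant factors, and then invoke Corollary \ref{cor:rates}, whose statement is crafted precisely to accommodate such slack.

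For the $\tau = 0$ case, I would set $i^\star = \lceil \log_2 C^*_{\kappa,0} \rceil$ so that $C_{i^\star} \in [C^*_{\kappa,0}, 2 C^*_{\kappa,0}]$; the condition $N \geq 2 C^*_{\kappa,0}$ places $i^\star$ inside $[1, \lfloor \log_2 N \rfloor]$. Feeding $C_{i^\star}$ into the $\tau = 0$ branch of Corollary \ref{cor:rates} gives $(c\kappa/C_{i^\star}^2)^{N/C_{i^\star}}$; bounding $C_{i^\star}^2 \geq e^2 c\kappa$ (so the base is at most $e^{-2}$) and $C_{i^\star} \leq 2 e (c\kappa)^{1/2}$ (so $N/C_{i^\star} \geq e^{-1}(c\kappa)^{-1/2} N / 2$) reproduces the stated $\exp(-e^{-1}(c\kappa)^{-1/2} N)$ rate.

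For $\tau > 0$ there are two regimes. When $\tau \geq 1/N$, the grid contains some $\tau_{j^\star} \in [\tau/2, \tau]$, with $j^\star \in [1, \lceil \log_2 N \rceil]$ since $\tau \leq 1$; I then pick $C_{i^\star} \in [C^*_{\kappa,\tau}, 2 C^*_{\kappa,\tau}]$, which fits in the grid thanks to $N \geq 2 C^*_{\kappa,\tau}$. Because $\tau_{j^\star} \leq \tau$, the explicit form \eqref{eq:C_alpha} gives $C(\tau_{j^\star}) \leq C^*_{\kappa,\tau} \leq C_{i^\star}$, so the $\tau > 0$ bound of Corollary \ref{cor:rates} applies and produces a denominator $(\tau_{j^\star} e^{-\tau_{j^\star}} C_{i^\star}^{-1} N + 1)^{2/\tau}$. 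Lower-bounding $\tau_{j^\star} \geq \tau/2$, $e^{-\tau_{j^\star}} \geq e^{-1}$ (since $\tau_{j^\star} \leq 1$), and using $C_{i^\star} \leq 2 e^{1-\tau} (c\kappa)^{1/2} (f(x_0) - f^*)^{-\tau/2}$ from the definition of $C^*_{\kappa,\tau}$, the denominator collapses to $\tau e^{-1} (c\kappa)^{-1/2} (f(x_0) - f^*)^{\tau/2} N / 4 + 1$; the extra $N \to N - 1$ slack absorbs the one-cycle overrun of the stopping rule $\sum_{k=1}^R t_k \geq N$ and the integer rounding of $t_k$ controlled by Lemma \ref{lem:total_number_approx} in the appendix. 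In the degenerate sub-case $0 < \tau < 1/N$, the grid cannot resolve $\tau$, so I would take $j^\star = \lceil \log_2 N \rceil$ (the smallest available exponent) and again $C_{i^\star} \in [C^*_{\kappa,\tau}, 2 C^*_{\kappa,\tau}]$; the extra hypothesis $C^*_{\kappa,\tau} > 1$ is what ensures $i^\star \geq 1$ is admissible here. One then verifies Corollary \ref{cor:rates}'s hypothesis $C_{i^\star} \geq C(\tau_{j^\star})$ by direct substitution—the ratio $C(\tau_{j^\star})/C^*_{\kappa,\tau}$ stays bounded because both $\tau$ and $\tau_{j^\star}$ are $O(1/N)$—and the resulting bound matches the claim since $(1 + x)^{2/\tau}$ varies negligibly as $\tau_{j^\star}$ ranges between $\tau$ and $2\tau$ in this regime.

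The grid contains $\lfloor \log_2 N \rfloor \cdot (1 + \lceil \log_2 N \rceil) = O((\log_2 N)^2)$ pairs $(i, j)$, and each generated scheme stops after at most $N + O(1)$ inner iterations, yielding the advertised $(\log_2 N)^2$ multiplicative overhead relative to running $N$ iterations of the optimal scheme. The main obstacle will be the degenerate sub-case $\tau < 1/N$: one must argue that choosing $\tau_{j^\star} > \tau$ does not push $C(\tau_{j^\star})$ out of the grid's range nor spoil the $2/\tau$ exponent of the denominator, which is exactly the bookkeeping enabled by the assumption $C^*_{\kappa,\tau} > 1$ combined with the boundedness of $\tau_{j^\star}/\tau$ in this regime.
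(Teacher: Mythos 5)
Your treatment of the cases $\tau=0$ and $\tau\geq 1/N$ follows the paper's proof essentially verbatim (same indices $i^\star=\lceil\log_2 C^*_{\kappa,\tau}\rceil$ and $j^\star=\lceil-\log_2\tau\rceil$, same appeal to Corollary~\ref{cor:rates}), up to one constant-tracking slip in the second case: bounding $e^{-\tau_{j^\star}}\geq e^{-1}$ and $C_{i^\star}\leq 2e^{1-\tau}(c\kappa)^{1/2}(f(x_0)-f^*)^{-\tau/2}$ \emph{separately} leaves a factor $e^{\tau-2}$ in the denominator, which is strictly smaller than the claimed $e^{-1}$ for $\tau<1$ and is not repaired by replacing $N$ with $N-1$. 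You must combine the two exponentials and use $\tau_{j^\star}\leq\tau$ to get $e^{\tau-\tau_{j^\star}-1}\geq e^{-1}$, as the paper does. (Also, the $N-1$ in the statement does not come from rounding or stopping overrun; it comes from the third case below.)

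The genuine gap is the sub-case $0<\tau<1/N$. You keep a geometric schedule with $\tau_{j^\star}=2^{-\lceil\log_2 N\rceil}$ and assert that $C(\tau_{j^\star})/C^*_{\kappa,\tau}$ stays bounded because both $\tau$ and $\tau_{j^\star}$ are $O(1/N)$. This is false: by \eqref{eq:C_alpha}, $C(\tau_{j^\star})=e^{\tau_{j^\star}(1-\tau)/\tau}(c\kappa)^{1/2}(f(x_0)-f^*)^{-\tau/2}$, and the ratio $\tau_{j^\star}/\tau$ is unbounded in this regime ($\tau$ may be arbitrarily small while $\tau_{j^\star}>1/(2N)$ is pinned by the grid), so $C(\tau_{j^\star})$ can exceed $N$ and no admissible $i\leq\lfloor\log_2 N\rfloor$ satisfies the hypothesis $C_i\geq C(\tau_{j^\star})$ of Corollary~\ref{cor:rates}. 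This is exactly why the paper abandons the geometric schedules here and analyzes the \emph{constant} scheme $\mathcal{S}_{i,0}$ directly: it unrolls the recursion $f(x_k)-f^*\leq \frac{c\kappa}{C_i^2}(f(x_{k-1})-f^*)^{2/r}$ into $\bigl(\frac{c\kappa(f(x_0)-f^*)^{-\tau}}{C_i^2}\bigr)^{(1-(1-\tau)^k)/\tau}(f(x_0)-f^*)$, uses the monotonicity in $\tau$ of $(1-\tau)\bigl(1-(1-\tau)^{N/C_i}\bigr)/\tau$ together with $(1+ax)^{b/x}\leq e^{ab}$ and $e^{-x}\leq 1-x+x^2/2$ to reach $\exp\bigl(-2\frac{N-1}{C_i}(1-\frac{1}{2C_i})\bigr)$, consumes the hypothesis $C^*_{\kappa,\tau}\geq 1$ precisely to guarantee $1-\frac{1}{2C_i}\geq\frac12$, and finally converts $\exp\bigl(-\frac{N-1}{2C^*_{\kappa,\tau}}\bigr)$ into the claimed polynomial form. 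Without this switch to the constant schedule, your argument does not close in this regime.
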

\begin{proof}
	Denote $R$ the number of restarts of a scheme $S_{ij}$, we have for $j=0$, $R = \lceil N/C_i \rceil$ and for $j\neq 0$, $R = \lceil \log\left(\frac{e^{\tau_j}-1}{e^{\tau_j} C_i}N+1\right)/ \tau_j \rceil$.
	Denote $N'= \sum_{k=1}^R t_k \geq N$ the number of iterations of a scheme $\mathcal{S}_{i,j}$. We necessarily have $N'\leq 2e^{1/2}N$ for our choice of $C_i$ and $\tau_j$. Hence the cost of running all methods is of the order of $N(\log_2 N )^2$.
	
	If $\tau = 0$ and $N\geq 2 C^*_{\kappa,0}$, then $i = \lceil \log_2 C^*_{\kappa,0} \rceil \leq \lfloor \log_2 N \rfloor$. Therefore $\mathcal{S}_{i,0}$ has been run and bound \eqref{eq:conv_s_2_gen_schedule_tau=0}  shows then that the last iterate $\hat{x}$ satisfies
	\[
	\vspace{-1ex}
	f(\hat{x}) -f^* \leq  \left(\frac{c\kappa}{C_i^2}\right)^\frac{N}{C_i} (f(x_0)-f^*).
	\]
	Using that $C^*_{\kappa,0} \leq C_i \leq 2C^*_{\kappa,0}$, 
	\BEAS
	f(\hat{x}) -f^* \leq  \left(\frac{c\kappa}{(C^*_{\kappa,0})^2}\right)^\frac{N}{2C^*_{\kappa,0}} (f(x_0)-f^*) \leq  \exp\left(-e^{-1}(c\kappa)^{-\frac{1}{2}} N\right)(f(x_0)-f^*).
	\EEAS
	
	If $\tau \geq \frac{1}{N}$ and $N \geq 2 C^*_{\kappa,\tau}$,  then $j = \lceil -\log_2 \tau \rceil \leq \lceil \log_2 N \rceil$ and $i = \lceil \log_2 C^*_{\kappa,\tau} \rceil \leq \lfloor \log_2 N \rfloor$. Therefore scheme $\mathcal{S}_{i,j}$ has been run. As $C_i \geq C^*_{\kappa,\tau} \geq C(\tau_j) $, where $C(\tau_j)$ is defined in~\eqref{eq:C_alpha}, bound~\eqref{eq:conv_s_2_gen_schedule_tau>0} shows that the last iterate $\hat{x}$ of scheme $\mathcal{S}_{i,j}$ satisfies
	\[
	f(\hat{x})- f^* \leq \frac{f(x_0)-f^*}{\left(\tau_j e^{-\tau_j} C_i^{-1}N+1\right)^\frac{2}{\tau}}.
	\]
	Finally, by definition of $i$ and $j$, $2\tau_j \geq \tau$ and $C_i \leq 2C^*_{\kappa,\tau}$, so 
	\BEAS
	f(\hat{x})- f^* \leq \frac{f(x_0)-f^*}{\left(\tau e^{-\tau_j} (C^*_{\kappa,\tau})^{-1}N/4 +1\right)^\frac{2}{\tau}}  =  \frac{f(x_0)-f^*}{\left(\tau e^{-1}(c\kappa)^{-\frac{1}{2}} (f(x_0)-f^*)^{\frac{\tau}{2}}  N/4 +1\right)^\frac{2}{\tau}},
	\EEAS
	where we concluded by expanding $C^*_{\kappa,\tau} = e^{1-\tau}(c\kappa)^\frac{1}{2}(f(x_0)-f^*)^{-\frac{\tau}{2}}$ and using that $\tau \geq \tau_j$.
	
	If $\frac{1}{N} > \tau >0$ and $N >2C^*_{\kappa,\tau}$, then $i = \lceil \log_2 C^*_{\kappa,\tau}\rceil \leq \lfloor \log_2 N \rfloor $, so scheme $\mathcal{S}_{i,0}$ has been run. As in~\eqref{eq:shrp-complx}, its iterates $x_k$ satisfy, with $1-\tau=2/r$,
	\BEAS
	f(x_k)-f^* & \leq & \frac{c \kappa}{C_i^2} (f(x_{k-1})-f^*)^\frac{2}{r} \\
	& \leq & \left(\frac{c \kappa}{C_i^2}\right)^{\left(1-(1-\tau)^k\right)/\tau} (f(x_0)-f^*)^{(1-\tau)^k} \\
	& \leq & \left(\frac{c \kappa(f(x_0)-f^*)^{-\tau}}{C_i^2}\right)^{\left(1-(1-\tau)^k\right)/\tau} (f(x_0)-f^*).
	\EEAS
	Now $C_i \geq C^*_{\kappa,\tau} = e^{1-\tau}(c\kappa)^\frac{1}{2}(f(x_0)-f^*)^{-\frac{\tau}{2}}$ and $C_i R \geq N$, therefore last iterate $\hat{x}$ satisfies
	\BEAS
	f(\hat{x})-f^* & \leq & \exp\left(-2(1-\tau) \frac{1-(1-\tau)^{N/C_i}}{\tau} \right) (f(x_0)-f^*).\\
	\EEAS
	As $N\geq C_i$, since
	$
	h(\tau) = \frac{(1-\tau)\left(1-(1-\tau)^\frac{N}{C_i}\right)}{1-(1-\tau)}
	$
	is decreasing with $\tau$ and $\frac{1}{N} > \tau >0$, we have 
	\BEAS
	f(\hat{x})-f^* & \leq & \exp\left(- 2(N-1)\left(1-\left(1-\frac{1}{N}\right)^{N/C_i}\right) \right) (f(x_0)-f^*) \\
	& \leq & \exp\left(- 2(N-1)\left(1-\exp\left(-\frac{1}{C_i}\right)\right) \right) (f(x_0)-f^*) \\
	& \leq & \exp\left(- 2\frac{N-1}{C_i}\left(1-\frac{1}{2C_i}\right) \right) (f(x_0)-f^*).
	\EEAS
	having used the facts that $ (1+a x)^{\frac{b}{x}} \leq \exp(a b)$ if $ax\geq -1$, $\frac{b}{x}\geq 0$ and $1-x+\frac{x^2}{2} \geq \exp(-x)$ when $x\geq 0$. As $C_i = 2^i \geq 1$, we finally get
	\BEAS
	f(\hat{x})-f^* & \leq & \exp\left(-\frac{N-1}{C_i}\right) (f(x_0)-f^*) \\
	& \leq & \exp\left(-\frac{N-1}{2C^*_{\kappa,\tau}}\right) (f(x_0)-f^*) \\
	& \leq & \frac{f(x_0)-f^*}{\left( \tau (C^*_{\kappa,\tau})^{-1} (N-1) /4+1\right)^\frac{2}{\tau}} \\
	& \leq & \frac{f(x_0)-f^*}{\left(\tau (f(x_0)-f^*)^{\frac{\tau}{2}} e^{-1}(c\kappa)^{-\frac{1}{2}} (N-1)/4 +1\right)^\frac{2}{\tau}}.
	\EEAS
	using the fact that $e^\tau \geq1$.
\end{proof}
 
In the strongly convex case, this adaptive bound is similar to the one of \citep{Nest13b} to optimize smooth strongly convex functions in the sense that we lose approximately a log factor of the condition number of the function. However our assumptions are weaker and our bound also handles all sharpness regimes, i.e. any exponent $r\in[2,+\infty]$, not just the strongly convex case. Finally the step size chosen for the grid search was set to 2. The proof can be adapted for a generic step size $h$, the size of the grid may be reduced but corresponding bounds will suffer an $h^2$ approximation loss compared to the best schedule. 

Note that the scheduled restart schemes we present here adapt to a global sharpness hypothesis on the sublevel set defined by the initial point and are not locally adaptive to potentially better constant~$\mu$ on smaller sublevel sets. On the other hand, restart schemes based on a primal gap, presented in Section~\ref{sec:adaptive}, do adapt to the local value of $\mu$, although these schemes require having access to the primal gap.

\subsection{Comparison to gradient descent}\label{sec:compagrad}
We end this section by analyzing the behavior of gradient descent in light of the sharpness assumption in order to compare the advantage of restarted accelerated method to plain gradient descent. While the bounds we obtain using the basic gradient method are suboptimal compared to the ones above, the gradient algorithm having no memory will automatically adapt to the best ``restart'' schedule.
Given only the smoothness hypothesis, the gradient descent algorithm, presented in e.g. \citep{Nest15}, starts from a point $x_0$ and outputs iterates 
\[
x_t = \mathcal{G}(x_0,t)\quad \mbox{such that} \quad f(x_t) -f^* \leq \frac{L}{t}d(x_0,X^*)^2.
\]
While accelerated methods use the last two iterates to compute the next one, simple gradient descent algorithms use only the last iterate, so the algorithm can be seen as (implicitly) restarting at each iteration. Formally we use that its convergence can be bounded as, for $k\geq 1$,
\BEQ\label{eq:grad_bound}
f(x_{k+t}) -f^* \leq \frac{L}{t}d(x_k,X^*)^2.
\EEQ
and we analyze it in light of the restart interpretation using the sharpness property.
\begin{proposition}
	Let $f$ be a convex function and $x_0 \in \dom f$. Denote $K = \{x: f(x) \leq f(x_0) \}$ and assume that $f \in \mathcal{H}_{2, L} \cap \mathcal{L}_{r, \mu}(K)$.
	Denote by $x_t = \mathcal{G}(x_0,t)$ the iterate sequence generated by the gradient descent algorithm started at $x_0$ to solve \eqref{eq:optim_pb_intro} and define 
	\[
	t_k = e^{1-\tau} c \kappa (f(x_0)-f^*)^\tau e^{\tau k},
	\]
	with $\kappa$ and $\tau$ defined in~\eqref{eq:kappa_tau} and $c = e^{2/e}$ here.
	The precision reached after $N=\sum_{k=1}^n t_k$ iterations is given by, 
	\begin{align*}
	f(x_N) - f^* \leq \exp\left(- e^{-1} (c\kappa)^{-1}N\right) (f(x_0)-f^*) = O\left(\exp(-\kappa^{-1} N)\right) , \quad \mbox{when $\tau =0$,}
	\end{align*}
	while, 
	\begin{align*}
	f(x_N) - f^* \leq \frac{f(x_0) -f^*}{\left(\tau e^{-1} (c\kappa)^{-1} (f(x_0)-f^*)^{\tau}  N+1\right)^\frac{1}{\tau}} =  O\left(N^{-\frac{1}{\tau}} \right) , \quad \mbox{when $\tau >0$.}
	\end{align*}
\end{proposition}
\begin{proof}
	For a given $\gamma \geq 0$, we construct a subsequence $x_{\phi(k)}$ of $x_t$ such that 
	\BEQ\label{eq:grad_lin_conv}
	f(x_{\phi(k)}) -f^* \leq e^{-\gamma k} (f(x_0)-f^*).
	\EEQ
	Define $x_{\phi(0)} = x_0$. Assume that \eqref{eq:grad_lin_conv} is true at iteration $k-1$, then combining complexity bound~\eqref{eq:grad_bound} and~\eqref{eq:KL}, for any $t\geq 1$,
	\BEAS
	f(x_{\phi(k-1)+t}) -f^* & \leq & \frac{c\kappa}{t}(f(x_{\phi(k-1)})-f^*)^\frac{2}{r} \\
	& \leq & \frac{c\kappa}{t}e^{-\gamma\frac{2}{r}(k-1)}(f(x_0)-f^*)^\frac{2}{r}.
	\EEAS
	where $c = e^{2/e}$, using that $r^{2/r}\leq e^{2/e}$.
	Taking $t_k = e^{\gamma(1-\tau)} c \kappa (f(x_0)-f^*)^{-\tau}e^{\gamma \tau k}$ and $\phi(k) = \phi(k-1)+t_k$  ensures that~\eqref{eq:grad_lin_conv} holds at iteration $k$. Using Lemma~\ref{lem:total_number}, we obtain at iteration $N= \phi(n) = \sum_{k=1}^n t_k$,
	\[
	f(x_N) - f^* \leq \exp\left(-\gamma e^{-\gamma} (c\kappa)^{-1} N\right) (f(x_0)-f^*),\quad\mbox{if $\tau = 0$,}
	\]
	and 
	\[
	f(x_N) - f^* \leq \frac{f(x_0) -f^*}{\left(\tau \gamma e^{-\gamma} (c\kappa)^{-1} (f(x_0)-f^*)^{\tau} N+1\right)^\frac{1}{\tau}},\quad \mbox{if $\tau>0$}.
	\]
	These bounds are minimal for $\gamma=1$ and the results follow.
\end{proof}

We observe that restarting accelerated gradient methods reduces complexity from $O(\epsilon^{-\tau})$ to $O(\epsilon^{-\tau/2})$ compared to simple gradient descent. More general results on the convergence of (sub)gradient descent algorithms under a {\L}ojasiewicz inequality assumption were developed by \citet{Bolt15}. 
	
	\section{Universal Scheduled Restarts for Convex Problems}\label{sec:scheduled_gen}
	In this section, we generalize previous results to $s$-smooth functions as defined in Definition~\ref{def:holder} to tackle both smooth and non-smooth convex optimization problems.
Without further assumptions on $f$, the optimal rate of convergence for this class of functions is bounded as $O(1/N^{\rho})$, where $N$ is the total number of iterations and
\BEQ\label{def:q}
\rho = 3s/2-1,
\EEQ 
which gives $\rho =2$ for smooth functions and $\rho = 1/2$ for non-smooth functions. 
The universal fast gradient method \citep{Nest15} achieves this rate by requiring only a target accuracy $\epsilon$ and a starting point $x_0$. It outputs after $t$ iterations a point
\BEQ\label{eq:algobound_gen}
 x = \mathcal{U}(x_0,\epsilon,t), \quad\mbox{such that} \quad f(x) - f^* \leq  \frac{\epsilon}{2} + \frac{cL^\frac{2}{s} d(x_0,X^* )^2}{\epsilon^{\frac{2}{s}}t^{\frac{2\rho}{s}}}  \frac{\epsilon}{2},
\EEQ
where $c$ is a constant ($c = 2^\frac{4s-2}{s}$). A simplified implementation of the universal fast gradient method that enforces monotonicity in objective values of the outputs of the algorithm is presented in Appendix~\ref{sec:algos}.

We assume again that $f$ satisfies a {\L}ojasiewicz growth condition on its initial sublevel set.
The key difference with the smooth case described in the previous section is that here we schedule {\em both} the target accuracy $\epsilon_k$ used by the algorithm {\em and} the number of iterations $t_k$ made at the $k\textsuperscript{th}$ run of the algorithm. Our scheme is described in Algorithm~\ref{algo:scheduled_gen}.

\begin{algorithm}[h]
	\caption{Universal scheduled restarts for convex minimization \label{algo:scheduled_gen}}
	\begin{algorithmic}		
		\State{\textbf{Inputs :} $x_0\in\reals^n$, $\epsilon_0 \geq f(x_0)-f^*$, $\gamma \geq 0$ and a sequence $t_k$ for $k=1,\ldots,R$.}
		\For{$k=1,\ldots,R $}
		\vspace*{-0.3cm}
		\State{\[ 
			\epsilon_k  :=  e^{-\gamma} \epsilon_{k-1}, \qquad
			x_k  :=  \mathcal{U}(x_{k-1},\epsilon_k,t_k) 
			\]}
		\vspace*{-0.5cm}
		\EndFor
		\State{\textbf{Output :} $\hat{x} := x_R$}
	\end{algorithmic}
\end{algorithm}
Our strategy is to choose a sequence $t_k$ that ensures
\[
f(x_k) - f^* \leq \epsilon_k,
\]
for the geometrically decreasing sequence $\epsilon_k$. The overall complexity of our method will then depend on the growth of $t_k$ as described in Lemma~\ref{lem:total_number}.
\begin{proposition}\label{th:sched_gen}
	Let $f$ be a convex function and $x_0 \in \dom f$. Denote $K = \{x: f(x) \leq f(x_0) \}$ and assume that $f \in \mathcal{H}_{s, L} \cap \mathcal{L}_{r, \mu}(K)$.
	Run Algorithm~\ref{algo:scheduled_gen} from $x_0$ for  a given $\epsilon_0 \geq f(x_0)-f^*$ with  
	\[ 
	\gamma =\rho, \qquad t_k = C^*_{\kappa,\tau,\rho}e^{\tau k},\quad \mbox{where}\quad  C^*_{\kappa,\tau,\rho} \triangleq e^{1-\tau}(c\kappa)^\frac{s}{2\rho}\epsilon_0^{-\frac{\tau}{\rho}} 
	\]
	where $\rho$ is defined in \eqref{def:q}, $\kappa$ and $\tau$ are defined in~\eqref{eq:kappa_tau} and $c = 8e^{2/e}$ here. The precision reached at the last point $\hat{x}$ is given by, 
	\begin{align*}
	f(\hat{x}) - f^* ~\leq~ \exp\left(-\rho e^{-1} (c\kappa)^{-\frac{s}{2\rho}} N\right)\epsilon_0 ~=~ O\left(\exp(-\kappa^{-\frac{s}{2\rho}} N)\right), \quad \mbox{when $\tau =0$,}
	\end{align*}
	while, 
	\begin{align*}
	f(\hat{x})-f^* ~\leq~ \frac{\epsilon_0}
	{
		\left(\tau e^{-1} (c\kappa)^{-\frac{s}{2\rho}}\epsilon_0^{\frac{\tau}{\rho}}N
		+1
		\right)^{-\frac{\rho}{\tau}}} ~=~ O\left(N^{-\frac{\rho}{\tau}}\right), \quad \mbox{when $\tau >0$,}
	\end{align*}
	where $N=\sum_{k=1}^R t_k$ is total number of iterations.
\end{proposition}
\begin{proof} 
	Our goal is to ensure that the target accuracy is reached at each restart, i.e. 
	\BEQ
	f(x_k)-f^* \leq \epsilon_k \label{eq:conv_lin}.
	\EEQ
	By assumption, \eqref{eq:conv_lin} holds for $k=0$. Assume that \eqref{eq:conv_lin} is true at iteration $k-1$, combining~\eqref{eq:KL} with the complexity bound in~\eqref{eq:algobound_gen}, then 
	\begin{align}\label{eq:shrp-complx_gen}
	f(x_k) - f^* & \leq   \frac{\epsilon_k}{2} + \frac{c\kappa(f(x_{k-1}) -f^*)^\frac{2}{r}}{\epsilon_k^{\frac{2}{s}}t_k^{\frac{2\rho}{s}}} \frac{\epsilon_k}{2} \leq  \frac{\epsilon_k}{2} + \frac{c\kappa}{t_k^{\frac{2\rho}{s}}} \frac{\epsilon_{k-1}^\frac{2}{r}}{\epsilon_k^\frac{2}{s}} \frac{\epsilon_k}{2},
	\end{align}
	where $c = 8e^{2/e}$ using that $r^{2/r} \leq e^{2/e}$.
	By definition $\epsilon_k= e^{-\gamma k} \epsilon_0$, so to ensure \eqref{eq:conv_lin} at iteration $k$ this imposes
	\[
	\frac{c\kappa e^{\gamma\frac{2}{r}} e^{-\gamma \left(\frac{2}{r}-\frac{2}{s}\right)k} }{ t_k^{\frac{2\rho}{s}} }  \epsilon_0^{\frac{2}{r}-\frac{2}{s}} \leq 1.
	\]
	Rearranging terms in last inequality, using $\tau$ defined in \eqref{eq:kappa_tau}, 
	\[
	t_k \geq e^{\gamma\frac{1-\tau}{\rho}}(c \kappa)^{\frac{s}{2\rho}} \epsilon_0^{-\frac{\tau}{\rho}}  e^{ \frac{\gamma \tau}{\rho} k }.
	\]
	Choosing $t_k = C e^{\alpha k}$, where
	\[
	C =  e^{\gamma\frac{1-\tau}{\rho}}(c \kappa)^{\frac{s}{2\rho}} \epsilon_0^{-\frac{\tau}{\rho}} \qquad \mbox{and} \qquad
	\alpha  =  \frac{\gamma \tau}{\rho},
	\]
	and using Lemma~\ref{lem:total_number} then yields, 
	\BEQ\label{eq:sched_gen_conv_tau=0}
	f(\hat{x}) -f^* \leq \exp(- \gamma e^{-\frac{\gamma}{\rho}} (c \kappa)^{-\frac{s}{2\rho}} N) \epsilon_0,
	\EEQ
	when $\tau =0$, while, 
	\BEQ\label{eq:sched_gen_conv_tau>0}
	f(\hat{x}) -f^* \leq  \quad \frac{\epsilon_0}
	{
		\left(\frac{\gamma \tau}{\rho} e^{-\frac{\gamma}{\rho}} (c\kappa)^{-\frac{s}{2\rho}}\epsilon_0^{\frac{\tau}{\rho}}N
		+1
		\right)^\frac{\rho}{\tau}}.
	\EEQ
	when $\tau >0$. These bounds are minimal for $\gamma=\rho$ and the results follow.
\end{proof}
This bound matches the lower bounds for optimizing smooth and sharp functions up to constant factors~\citep[Eq. 1.21]{Nemi85}. Notice that, compared to \citet{Nemi85}, we can tackle non-smooth convex optimization by using the universal fast gradient algorithm of \citet{Nest15}. 
The rate of convergence in Proposition~\ref{th:sched_gen} is controlled by the ratio between $\tau$ and $\rho$. If these are unknown, a log-scale grid search will not be able to reach the optimal rate, even if $\rho$ is known since we will miss the optimal rate by a constant factor, see Appendix~\ref{sec:universal_grid_search}. If both are known, in the case of non-smooth strongly convex functions for example, a grid-search on $C$ recovers a nearly optimal bound. Finally note that our bound is provided with respect to the number of iterations of the accelerated algorithms, the corresponding bounds in terms of numbers of calls to the oracles can be found by analyzing the line-search cost of the fast universal gradient method.
	
	\section{Restart With Known Primal Gap}\label{sec:adaptive}
	Here, we assume that we know the optimum $f^*$ of \eqref{eq:optim_pb_intro}.
This is the case for example in zero-sum matrix game problems or over-parametrized least-squares without regularization. We assume again that $f$ satisfies the generic smoothness assumption \eqref{eq:smooth} and the {\L}ojasiewicz growth condition \eqref{eq:KL} on its initial sublevel set. 
We use again the universal gradient method $\mathcal{U}$. 
Here however, we can stop the algorithm when it reaches the target accuracy as we know the optimum $f^*$,
i.e. we stop after $t_\epsilon$ inner iterations such that $x = \mathcal{U}(x_0,\epsilon,t_\epsilon)$ satisfies $f(x)-f^* \leq \epsilon$, and write $x \triangleq \mathcal{C}(x_0,\epsilon)$ the output of this method. 

Here we simply restart this method and decrease the target accuracy by a constant factor after each restart. Our scheme is described in Algorithm~\ref{algo:crit}. The following proposition describes its convergence. 
\begin{algorithm}[H]
	\caption{Restart with known primal gap for convex minimization \label{algo:crit}}
	\begin{algorithmic}
		\State{\textbf{Inputs :} $x_0 \in \mathbb{R}^n, f^*, \gamma \geq 0, \epsilon_0 = f(x_0)-f^*$}
		\For{$k=1,\ldots,R$}
		\vspace*{-0.3cm}
		\State{\[
			\epsilon_k  :=  e^{-\gamma} \epsilon_{k-1}, \qquad
			x_k  :=  \mathcal{C}(x_{k-1},\epsilon_k) 
			\]}
		\vspace*{-0.5cm}
		\EndFor
		\State{\textbf{Output :} $\hat{x} := x_R$}
	\end{algorithmic}
\end{algorithm}
\begin{proposition}\label{th:crit}
	Let $f$ be a convex function and $x_0 \in \dom f$. Denote $K = \{x: f(x) \leq f(x_0) \}$ and assume that $f \in \mathcal{H}_{s, L} \cap \mathcal{L}_{r, \mu}(K)$. 
	Run Algorithm~\ref{algo:crit} from $x_0$ with parameter $\gamma = 1$. The precision reached at the last point $\hat x$ is given by,
	\begin{align*}
	f(\hat{x}) - f^* ~\leq~  \exp\left(- e^{-\frac{1}{\rho}} (c\kappa)^{-\frac{s}{2\rho}} N\right)(f(x_0)-f^*) ~=~  O\left(\exp(-\kappa^{-\frac{s}{2\rho}} N)\right) , \quad \mbox{when $\tau =0$,}
	\end{align*}
	while,
	\[
	f(\hat{x})-f^* ~\leq~  \frac{f(x_0)-f^*}
	{
		\left(\frac{\tau}{\rho} e^{-\frac{1}{\rho}} (c\kappa)^{-\frac{s}{2\rho}}(f(x_0)-f^*)^{\frac{\tau}{\rho}}N
		+1
		\right)^\frac{\rho}{\tau}} ~=~   O\left(N^{-\frac{\rho}{\tau}}\right), \quad \mbox{when $\tau >0$,}
	\]
	where $N$ is the total number of iterations, $\rho$ is defined in \eqref{def:q}, $\kappa$ and $\tau$ are defined in~\eqref{eq:kappa_tau} and $c = 8e^{2/e}$ here. Those bounds are suboptimal to the best scheduled restarts by a factor at most $e/2 \approx 1.3$.
\end{proposition}
\begin{proof}
	Given $\gamma \geq 0$, the linear convergence of our scheme is ensured by our choice of target accuracies $\epsilon_k$. It remains to compute the number of iterations $t_{\epsilon_k}$  needed by the algorithm before the $k\textsuperscript{th}$ restart. Following the proof of Proposition~\ref{th:sched_gen}, for $k\geq 1$ we know that the target accuracy is necessarily reached after
	\[
	\bar{t}_k = e^{\gamma\frac{1-\tau}{\rho}}(c \kappa)^{\frac{s}{2\rho}} \epsilon_0^{-\frac{\tau}{\rho}}  e^{ \frac{\gamma \tau}{\rho} k }
	\]
	iterations, such that $t_{\epsilon_k} \leq \bar{t}_k$. So Algorithm~\ref{algo:crit} achieves linear convergence while needing less inner iterates than the scheduled restart presented in Proposition~\ref{th:sched_gen}, its convergence is therefore at least as good. For a given $\gamma$ bounds~\eqref{eq:sched_gen_conv_tau=0} and~\eqref{eq:sched_gen_conv_tau>0} follow with $\epsilon_0 = f(x_0)-f^*$. The dependency in $\gamma$ of the restart scheme in bounds~\eqref{eq:sched_gen_conv_tau=0} and~\eqref{eq:sched_gen_conv_tau>0} is a factor 
	\[
	h(\gamma) = \gamma e^{-{\gamma}/{\rho}}
	\]
	of the number of iterations, whose maximum value is reached for $\gamma=\rho$. Taking $\gamma = 1$, then leads to a bound suboptimal by a constant factor of at most $h(\rho)/h(1) \leq e/2 \approx 1.3$ for $\rho \in[1/2,2]$, so running this scheme with $\gamma =1$ makes it parameter-free while producing nearly optimal complexity.
\end{proof}

When $f^*$ is known, the above restart scheme is adaptive, contrary to the general non-smooth case in Proposition~\ref{th:sched_gen}. It can even adapt to the local values of $L$ or $\mu$ as we use a criterion instead of a preset schedule. Here, stopping using $f(x_k)-f^*$ implicitly yields optimal choices of $C$ and $\tau$. Note that this approach generalizes to algorithms for which a bound on the primal gap is available as in the Frank-Wolfe algorithm, see~\citep{kerdreux2019restarting}. 
	
	\section{Extensions}\label{sec:compobreg}
	Previous analyses of restart schemes only require bounds of the form~\eqref{eq:algobound_s=2} or ~\eqref{eq:algobound_gen}. Our results extend then readily to non-Euclidean composite settings or structured objectives as presented below. 

\subsection{Composite Problems \& Bregman Divergences}
We extend previous schemes to more general convex optimization problems of the form
\BEQ\label{eq:pb_composite}
\mbox{minimize}~ f(x) \triangleq \phi(x) + g(x),
\EEQ
where $g$ is a simple convex function (the meaning of simple will be clarified later), $\phi$ is a convex $s$-smooth function with respect to a given norm $\|\cdot\|$ (potentially non-Euclidean) as defined below,  and $\phi$ is defined on an open-set containing $\dom g$, i.e. $\dom f = \dom g$.

\begin{definition}
	A function $\phi$ is $s$-smooth for a given $1\leq s\leq 2$ with respect to a norm $\|\cdot\|$ if there exists a constant $L>0$ such that
	\BEQ\label{eq:gen_smooth}\nonumber
	\|\nabla \phi(x)-\nabla \phi(y) \|_* \leq L \|x-y\|^{s-1}, \quad  \mbox{for all $x,y \in \dom \phi$}
	\EEQ
	and any subgradients $\nabla \phi(x) \in \partial \phi(x), \nabla \phi(y) \in \partial \phi(y)$ of $\phi$ at $x, y$ respectively, with $\|\cdot\|_*$ being the dual norm of $\|\cdot\|$.
	We denote by $\mathcal{H}_{s, L, \|\cdot\|}$ the set of $s$-smooth functions with respect to a norm $\|\cdot\|$ with parameter $L$.
\end{definition}
To exploit the smoothness of $\phi$ with respect to a generic norm, we assume that we have access to a potential function $h$ with $\dom(f) \subset\dom(h)$, strongly convex with respect to the norm $\|\cdot\|$ with convexity parameter equal to one, which means
\[ 
h(y) \geq h(x) + \nabla h(x)^T(y-x) + \frac{1}{2} \|x-y\|^2, \quad \mbox{for any $x,y \in \dom(h)$}. 
\]
We define the Bregman divergence associated to $h$ as, for given $x,y \in \dom(h)$,
\[ 
D_h(y;x) = h(y) - h(x) -\nabla h(x)^T(y-x) \geq  \frac{1}{2} \|x-y\|^2.
\]
For $h(x) = \frac{1}{2}\|x\|_2^2$, we get $D_h(y;x) = \frac{1}{2}\|x-y\|_2^2$ and recover the Euclidean setting. Given the problem geometry, appropriate choices of potential functions and associated Bregman divergences can lead to significant performance gains in high dimensional settings. 
We now formally state the assumption that $g$ is simple. Given $x,y \in \dom(f)$ and $\lambda\geq 0$ we assume that 
\BEQ\label{eq:prox_g}
\min_z \left\{y^Tz + g(z) + \lambda D_h(z;x) \right\}
\EEQ
can be solved either in a closed form or by some fast computational procedure.

This setting includes some constrained optimization problems, where $g$ is the indicator function of a closed convex set, on which we can easily project the points. It also includes sparse optimization problems, such as the LASSO, where $\phi(x)= \|Ax-b\|_2^2$, with $A\in \reals^{m\times n}$, $ b \in \reals^m$,  $g(x) = \lambda \|x\|_1$, with $\lambda \geq 0$ and $h(x) = \frac{1}{2}\|x\|_2^2$.
To apply our analysis of restart schemes we need two things: an accelerated algorithm and an appropriate notion of sharpness. In the spirit of \citep{Baus16,Lu16}, we thus introduce the notion of relative sharpness.
\begin{definition}
	A function $f$ satisfies a relative {\L}ojasiewicz growth condition  with respect to a strictly convex function $h$ on a set $K$ if there exist $r\geq 1$, $\mu >0$ such that 
	\BEQ\label{eq:relative_sharp}
	\frac{\mu}{r} D_h(x,X^*)^\frac{r}{2} \leq f(x) - f^* \quad  \mbox{for any $x \in K$} 
	\EEQ
	where $D_h(x,X^*) = \min_{x^* \in X^*} D_h(x^*; x)$ and $D_h$ is the Bregman divergence associated to $h$. We denote by $\mathcal{L}_{r, \mu, h}(K)$ the set of functions satisfying a relative  {\L}ojasiewicz growth condition w.r.t to $h$ on a set $K$ with parameters $r, \mu$.
\end{definition}
If $h = \frac{1}{2}\|x\|_2^2$ we recover the definition of the {\L}ojasiewicz growth in the Euclidean setting (with slightly modified constants). This assumption is as generic as our first one in~\eqref{eq:KL} as it is satisfied if $f$ and $h$ are subanalytic \citep[Th. 6.4]{Bier88}. 

The algorithms are essentially the same as before, except that the distance to the set of minimizers is replaced by the Bregman divergence to the set of minimizers. We keep the same notations for the algorithms as the implementations are the same as presented in Section~\ref{sec:algos}. Formally, if $\phi$ is smooth with respect to a norm $\|\cdot\|$, the accelerated algorithm outputs after $t$ iterations a point 
\BEQ\label{eq:acc_breg_bound}
x = \mathcal{A}(x_0,t) \qquad \mbox{such that} \qquad f(x) - f^* \leq  \frac{cL}{t^2} D_h(x_0,X^*),
\EEQ
where $c = 8$ here. The next Corollary generalizes Proposition~\ref{th:sched}.
\begin{restatable}{corollary}{bregsmooth}\label{cor:sched}
Let $f = \phi +g$ be a composite convex function, $x_0 \in \dom f$ and $K = \{x : f(x) \leq f(x_0)\}$. Assume that $\phi \in \mathcal{H}_{2, L, \|\cdot\|}$ for a given norm $\|\cdot\|$, that $f \in \mathcal{L}_{r, \mu, h}(K)$ for $h$ strongly convex with respect to $\|\cdot\|$ and that $g$ is simple such that problems~\eqref{eq:prox_g} can be computed efficiently. Run Algorithm~\ref{algo:scheduled_s=2} from $x_0$ with iteration schedule $t_k=C^*_{\kappa,\tau}e^{\tau k}$, for $k=1,\ldots,R$,
where 
\[
C^*_{\kappa,\tau} \triangleq e^{1-\tau}(c\kappa)^\frac{1}{2}(f(x_0)-f^*)^{-\frac{\tau}{2}},
\]
with $\kappa$ and $\tau$ defined in~\eqref{eq:kappa_tau} and $c=8e^{2/e}$. The precision reached at the last point $\hat{x}$ is given by,
\begin{align*}
f(\hat{x}) - f^* \leq \exp\left(-2e^{-1}(c\kappa)^{-\frac{1}{2}} N\right)(f(x_0)-f^*) = O\left(\exp(-\kappa^{-\frac{1}{2}} N)\right),  \quad \mbox{when $\tau =0$,}
\end{align*}
while,
\begin{align*}
f(\hat{x}) -f^* \leq \frac{f(x_0)-f^*}{\left(\tau e^{-1}(f(x_0)-f^*)^{\frac{\tau}{2}}  (c\kappa)^{-\frac{1}{2}}N +1\right)^\frac{2}{\tau}} = O\left(N^{-\frac{2}{\tau}}\right), \quad \mbox{when $\tau >0$,}
\end{align*}
where $N=\sum_{k=1}^R t_k$ is the total number of iterations.
\end{restatable}
\begin{proof}
	The proof of Proposition~\ref{th:sched} only relies on the bound in~\eqref{eq:shrp-complx} that combines the growth condition~\eqref{eq:KL} with the complexity bound in~\eqref{eq:algobound_s=2}. For the case with composite problems and Bregman divergences we combine~\eqref{eq:relative_sharp} with the bound~\eqref{eq:acc_breg_bound}, which ensures for the $k$\textsuperscript{th} iterate of the restart scheme,
	$
	f\left(x_k\right)-f^* \leq  \frac{c \kappa}{t_k^2}(f\left(x_{k-1}\right)-f^*)^\frac{2}{r},
	$
	with here $c=8e^{2/e}$. The rest of the proof follows as in Proposition~\ref{th:sched}.
\end{proof}
For general convex functions, given a target accuracy $\epsilon$ and an initial point $x_0$, the universal fast gradient method  outputs after $t$ iterations a point 
\begin{equation}\label{eq:univ_breg_bound}
x = \mathcal{U}(x_0,\epsilon,t) \quad \mbox{such that} \quad f(x) - f^* \leq  \frac{\epsilon}{2} + \frac{cL^\frac{2}{s}D_h(x_0,X^*)}{\epsilon^{\frac{2}{s}}t^{\frac{2\rho}{s}}} \frac{\epsilon}{2},
\end{equation}
where $c = 2^{\frac{5s-2}{s}}$ here. The following Corollary generalizes then Proposition~\ref{th:sched_gen}.
\begin{restatable}{corollary}{breggen}\label{cor:sched_gen}
	Let $f = \phi +g$ be a composite convex function, $x_0 \in \dom f$ and $K = \{x : f(x) \leq f(x_0)\}$. Assume that $\phi \in \mathcal{H}_{s, L, \|\cdot\|}$ for a given norm $\|\cdot\|$, that $f \in \mathcal{L}_{r, \mu, h}(K)$ for $h$ strongly convex with respect to $\|\cdot\|$ and that $g$ is simple such that problems~\eqref{eq:prox_g} can be computed efficiently. Run Algorithm~\ref{algo:scheduled_gen} from $x_0$ for given $\epsilon_0 \geq f(x_0)-f^*$,  
	\[ 
	\gamma =\rho, \qquad t_k = C^*_{\kappa,\tau,\rho}e^{\tau k},\quad \mbox{where}\quad
	C^*_{\kappa,\tau,\rho} \triangleq e^{1-\tau}(c\kappa)^\frac{s}{2\rho}\epsilon_0^{-\frac{\tau}{\rho}} 
	\]
	where $\rho$ is defined in~\eqref{def:q}, $\kappa$ and $\tau$ are defined in~\eqref{eq:kappa_tau} and $c=16e^{2/e}$. The precision reached at the last point $\hat{x}$ is given by, 
	\begin{align*}
	f(\hat{x}) - f^* ~\leq~ \exp\left(-\rho e^{-1} (c\kappa)^{-\frac{s}{2\rho}} N\right)\epsilon_0 ~=~ O\left(\exp(-\kappa^{-\frac{s}{2\rho}} N)\right), \quad \mbox{when $\tau =0$,}
	\end{align*}
	while,
	\begin{align*}
	f(\hat{x})-f^* ~\leq~  \frac{\epsilon_0}
	{
		\left(\tau e^{-1} (c\kappa)^{-\frac{s}{2\rho}}\epsilon_0^{\frac{\tau}{\rho}}N
		+1
		\right)^\frac{\rho}{\tau}} ~=~ O\left(N^{-\frac{\rho}{\tau}}\right), \quad \mbox{when $\tau >0$,}
	\end{align*}
	where $N=\sum_{k=1}^R t_k$ is total number of iterations.
\end{restatable} 
\begin{proof}
	The proof of Proposition~\ref{th:sched_gen} only relies on the bound in~\eqref{eq:shrp-complx_gen} that combines the growth condition~\eqref{eq:KL} with the complexity bound in~\eqref{eq:algobound_gen}. For the case with composite problems and Bregman divergences we combine~\eqref{eq:relative_sharp} with the bound~\eqref{eq:univ_breg_bound}, which ensures for the $k$\textsuperscript{th} iterate of the restart scheme,
	$
	f(x_k) - f^* \leq   \frac{\epsilon_k}{2} + \frac{c\kappa(f(x_{k-1}) -f^*)^\frac{2}{r}}{\epsilon_k^{\frac{2}{s}}t_k^{\frac{2\rho}{s}}} \frac{\epsilon_k}{2} 
	$
	with here $c=16e^{2/e}$. The rest of the proof follows as in Proposition~\ref{th:sched_gen}.	
\end{proof}
The results regarding adaptive schemes and those for which $f^*$ is known, i.e. Propositions~\ref{prop:grid_search} and~\ref{th:crit} respectively, generalize similarly under the relative growth assumption. Those results apply then to generic $\ell_{1,p}$ regularized prediction problems where $g$ is an $\ell_{1,p}$ norm and $\phi$ is a data-fitting term. Indeed error bounds were proven to hold for those problems by~\citet{Zhou15}. Those error bounds are then equivalent to quadratic growth conditions, i.e.~\eqref{eq:KL} with $r=2$~\citep{Drus18}. Previous works demonstrate then linear convergence of proximal gradient descent~\citep{Bolt15}. Here the restart schemes allow to get accelerated rates similar as for smooth strongly convex problems. Note that adaptive schemes were also developed by~\citep{Ferc17} in that case.

\subsection{Smoothing non-smooth problems}
Our approach extends also to problems that can be smoothed, i.e. problems of the form 
\begin{equation}\label{eq:pb_smoothing}
	\mbox{minimize} \quad f(x) \triangleq \phi(Ax) + g(x)
\end{equation}
where $A \in \reals^{m\times n}$, $g$ is a simple convex function and $\phi$ is a non-smooth convex function whose inf-convolution with some smooth  convex function $\psi$  can be computed analytically, i.e. one has access for any $\mu>0$ to 
\begin{equation}\label{eq:smoothing}
\phi_{\mu \psi^\star}(x) = \sup_{u \in \dom \phi^\star} \left\{u^\top x - \phi^\star(u) - \mu \psi^\star(u)\right\},
\end{equation}
where for a function $f$ we denote by $f^\star$ its convex conjugate. Those problems were notably considered by \citet{Nest05}, who proved that, though they a priori suffer from their non-smoothness, they can still be solved in $O(\varepsilon)$ calls to an oracle by using their structure. Formally, we have access to an algorithm $\mathcal{S}$ that, given an initial point $x_0$ and a target accuracy $\varepsilon$, outputs after $t$ iterations a point 
\begin{equation}\label{eq:smoothing_rate}
x = \mathcal{S}(x_0, \epsilon, t) \quad \mbox{such that} \quad f(x) -f^* \leq  \frac{\epsilon}{2} + \frac{c L_{\psi^\star, A}^2D_h(x, X^*)}{\epsilon^2 t^2} \frac{\epsilon}{2}, \quad \mbox{and} \quad  f(x) \leq f(x_0),
\end{equation}
where $h$ is some potential function and $L_{\psi^\star, A}$ is a smoothing constant, see Appendix~\ref{sec:algos} for more details. The scheduled restarts of this algorithm will follow the same strategy as for the fast universal gradient method as presented in the following proposition.
\begin{restatable}{proposition}{smoothingrestart}\label{prop:smoothing}
	Let $f(x) =\phi(A x) + g(x)$ be a non-smooth objective that can be smoothed using~\eqref{eq:smoothing}, $x_0 \in \dom f$  and $K = \{x : f(x) \leq f(x_0)\}$. Assume that we have access to a smoothing method $\mathcal{S}$ ensuring~\eqref{eq:smoothing_rate} for a given strongly convex function $h$ and that $f \in \mathcal{L}_{r, \mu, h}(K)$.
	Given $\epsilon_0 \geq f(x_0) - f^*$, restart the method $\mathcal{S}$ such that for $k\geq 1$,
	\[
	x_k = \mathcal{S}(x_{k-1}, \epsilon_k, t_k), \qquad \epsilon_k = e^{-1} \epsilon_{k-1}, \qquad t_k = \tilde C^*_{\kappa,\tau}e^{\tau k}, \qquad \tilde C^*_{\kappa,\tau} \triangleq e^{1-\tau}(c\kappa)^\frac{1}{2}\epsilon_0^{-\tau},
	\]
	where $\kappa$ and $\tau$ are defined as in~\eqref{eq:kappa_tau} with $s=1$ and $L_{\psi^*, A}$ in place of $L$.
	
	The precision reached at a point $\hat{x}= x_R$ after $R$ restarts is given by, 
	\begin{align*}
	f(\hat{x}) - f^* ~\leq~ \exp\left(- e^{-1} (c\kappa)^{-\frac{1}{2}} N\right)\epsilon_0 ~=~ O\left(\exp(-\kappa^{-\frac{1}{2}} N)\right), \quad \mbox{when $\tau =0$,}
	\end{align*}
	while,
	\begin{align*}
	f(\hat{x})-f^* ~\leq~  \frac{\epsilon_0}
	{
		\left(\tau e^{-1} (c\kappa)^{-\frac{1}{2}}\epsilon_0^\tau N
		+1
		\right)^\frac{1}{\tau}} ~=~ O\left(N^{-\frac{1}{\tau}}\right), \quad \mbox{when $\tau >0$,}
 	\end{align*}
	where $N=\sum_{k=1}^R t_k$ is total number of iterations.
\end{restatable}
\begin{proof}
	The smoothing method has a bound of the same form as the universal fast gradient method, i.e. we have
	\[
	x = \mathcal{S}(x_0, \epsilon, t) \qquad \mbox{such that} \qquad  f(x) - f^* \leq  \frac{\epsilon}{2} + \frac{cL^\frac{2}{s} D_h(x_0,X^* )^2}{\epsilon^{\frac{2}{s}}t^{\frac{2\rho}{s}}}  \frac{\epsilon}{2},	
	\]
	with here $L= L_{\psi^\star, A}$, $s=1$ and $\rho=1$. The optimal restart schedule and corresponding rates follow then from Proposition~\ref{th:sched_gen} by replacing $s=1$  and $\rho=1$.
\end{proof}
As for the universal fast gradient method, a grid-search will not get optimal rates if $r$ and so $\tau$ is unknown. However if it is known, a grid-search will ensure optimal rates up to a constant factor. It is illustrated for sparse recovery problems by~\citet{Roul15}.

If $f^*$ is known, Proposition~\ref{th:crit} is modified into the following proposition. Note that the resulting restart scheme is the one presented by~\citet{Gilp12} for zero-sum matrix games.
\begin{restatable}{proposition}{smoothingrestartoptknown}\label{prop:smoothing_crit}
	Let $f(x) =\phi(A x) + g(x)$ be a non-smooth objective that can be smoothed using~\eqref{eq:smoothing}, $x_0 \in \dom f$  and $K = \{x : f(x) \leq f(x_0)\}$. Assume that $f^*$ is known, that we have access to a smoothing method $\mathcal{S}$ ensuring~\eqref{eq:smoothing_rate} for a given strongly convex function $h$ and that $f \in \mathcal{L}_{r, \mu, h}(K)$.
Denoting $\epsilon_0 = f(x_0) -f^*$, consider the restart scheme defined by 
\[
x_k = S(x_{k-1}, \epsilon_k, t_k) \quad \mbox{s.t.} \quad  \epsilon_k = e^{-1} \epsilon_0 \quad t_k = \argmin \{t: x= \mathcal{S}(x_{k-1}, \epsilon_k, t) \: \mbox{satisfies} \: f(x) - f^* \leq \epsilon_k\}.
\] 
	The precision reached at a point $\hat{x}= x_R$ after $R$ restarts is given by, 
	\begin{align*}
	f(\hat{x}) - f^* ~\leq~ \exp\left(- e^{-1} (c\kappa)^{-\frac{1}{2}} N\right)\epsilon_0 ~=~ O\left(\exp(-\kappa^{-\frac{1}{2}} N)\right), \quad \mbox{when $\tau =0$,}
	\end{align*}
	while,
	\begin{align*}
	f(\hat{x})-f^* ~\leq~  \frac{\epsilon_0}
	{
		\left(\tau e^{-1} (c\kappa)^{-\frac{1}{2}}\epsilon_0^\tau N
		+1
		\right)^\frac{1}{\tau}} ~=~ O\left(N^{-\frac{1}{\tau}}\right), \quad \mbox{when $\tau >0$,}
	\end{align*}
	where $N=\sum_{k=1}^R t_k$ is total number of iterations and $\kappa$ and $\tau$ are defined as in~\eqref{eq:kappa_tau} with $s=1$ and $L_{\psi^*, A}$ in place of $L$.
\end{restatable}
\begin{proof}
	As in Proposition~\ref{th:crit}, the proposed scheme with a termination criterion on the gap cannot do worse than the optimal scheduled restart. The rate is then given by Proposition~\ref{prop:smoothing}.
\end{proof}
	
	\section{Numerical Results}\label{sec:numres}
	We illustrate our results by testing our adaptive restart schemes, the adaptive scheme \emph{Adap} of  Section~\ref{ss:adapt}, and the scheme with stopping criterion on the primal gap \emph{Crit} in Section~\ref{sec:adaptive}, on several problems to compare them against simple gradient descent (\emph{Grad}), accelerated gradient methods (\emph{Acc}), and the restart heuristic enforcing monotonicity (\emph{Mono}) proposed by \citep{Odon15}. For \emph{Adap} we plot the convergence of the best method found by grid search to compare with the restart heuristic. This implicitly assumes that the grid search is run in parallel with enough servers. For \emph{Crit} we use the optimal $f^*$ found by another solver. This gives an overview of its performance when such information is available. All restart schemes were performed using the accelerated gradient with backtracking line search detailed in Appendix~\ref{sec:algos}, with large dots representing restart iterations. 

In Figure~\ref{fig:classif}, we solve classification problems with various losses on the UCI \emph{Sonar} data set \citep{Asun07}. For the least square loss on sonar data set, we observe much faster convergence of the restart schemes compared to the accelerated method. These results were already observed by \citet{Odon15}. For the logistic loss, we observe that restart does not provide much improvement for a budget of 1000 iterations.
For the hinge loss, we regularized by a squared norm and optimize the dual, which means solving a quadratic problem with box constraints. We observe here that the scheduled restart scheme converges much faster, while restart heuristics may be activated too late. We observe similar results for the LASSO problem.  
This highlights the benefits of a sharpness assumption for these last two problems. In general \emph{Crit} ensures the theoretical accelerated rate but \emph{Adap} exhibits more consistent behavior. Again, precisely quantifying sharpness from  data/problem structure is a key open problem. 

To account for the grid search effort, in Figure~\ref{fig:fair_plot}, we multiplied the number of iterations made by the \emph{Adap} method by the size of the grid. This is for the LASSO problem on \emph{Sonar} data set with a grid step size of $4$. This shows that the benefits of the restart schemes make the grid search effort acceptable both on paper and in practice. More clever grid search strategies for scheduled restarts run in parallel would even reduce this effort.

\begin{figure}[H]
	\begin{center}
		\psfrag{f(x)-f\textsuperscript{*}}[c][c]{\tiny{$f_(x_t)-f^*$}}
		\psfrag{Iterations}[c][c]{\small{Iterations}}
		\includegraphics[width = 0.35\linewidth]{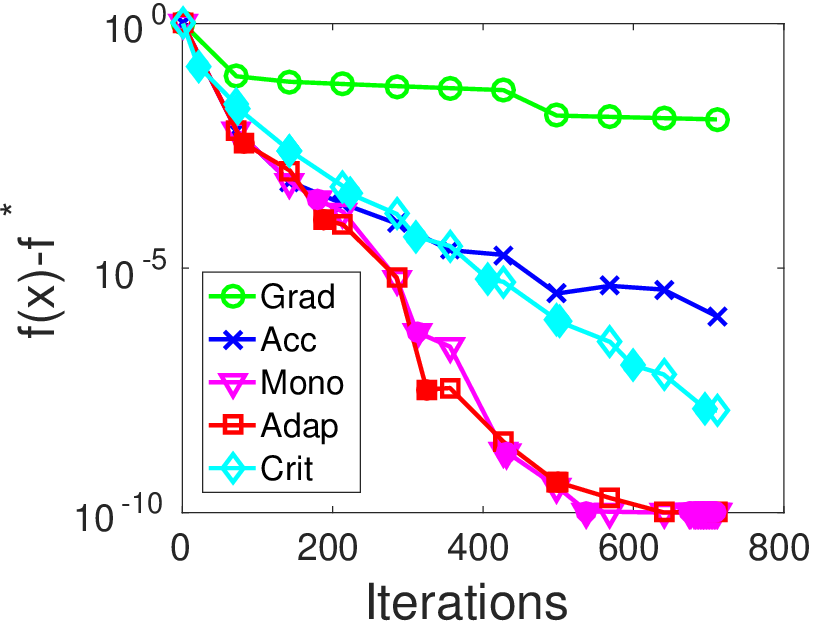}~
		\includegraphics[width = 0.35\linewidth]{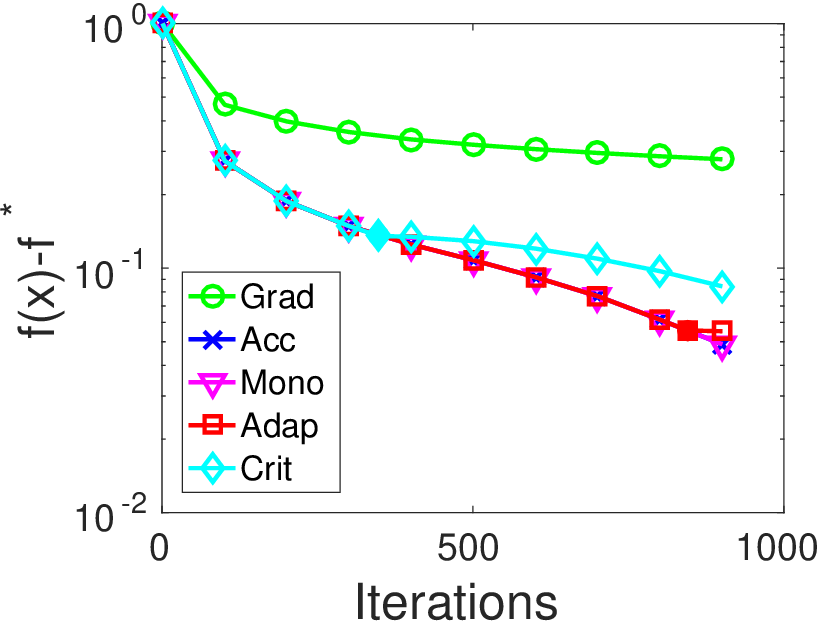}
		
		\includegraphics[width = 0.35\linewidth]{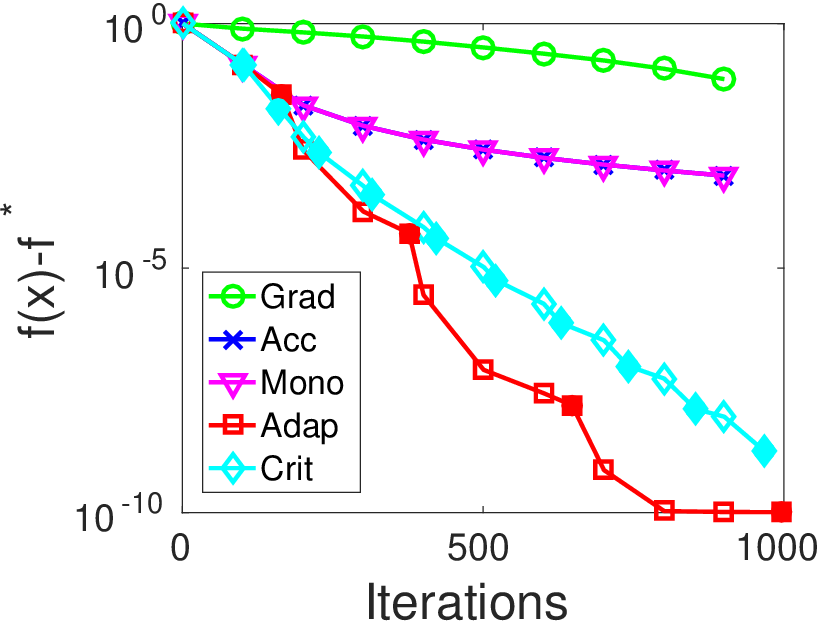}~
		\includegraphics[width = 0.35\linewidth]{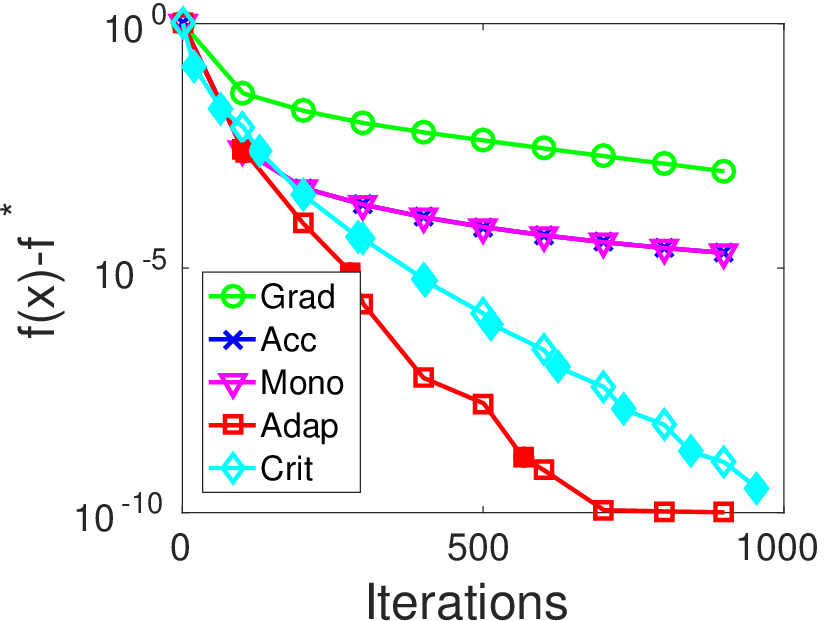}
		\vskip 2ex
		\caption{Sonar data set. From left to right: least square loss, logistic loss, dual SVM problem and LASSO. We use adaptive restarts (Adap), gradient descent (Grad), accelerated gradient (Acc) and restart heuristic enforcing monotonicity (Mono). Large dots represent the restart iterations. Regularization parameters for dual SVM and LASSO were set to one.}
		\label{fig:classif}
	\end{center}
\end{figure}

\begin{figure}[h]
	\label{fig:fair_plot}
	\begin{center}
		\psfrag{f(x)-f^*}[c][c]{\tiny{$f_(x_t)-f^*$}}
		\psfrag{Iterations}[c][c]{\small{Iterations}}
		\includegraphics[width = 0.38\linewidth]{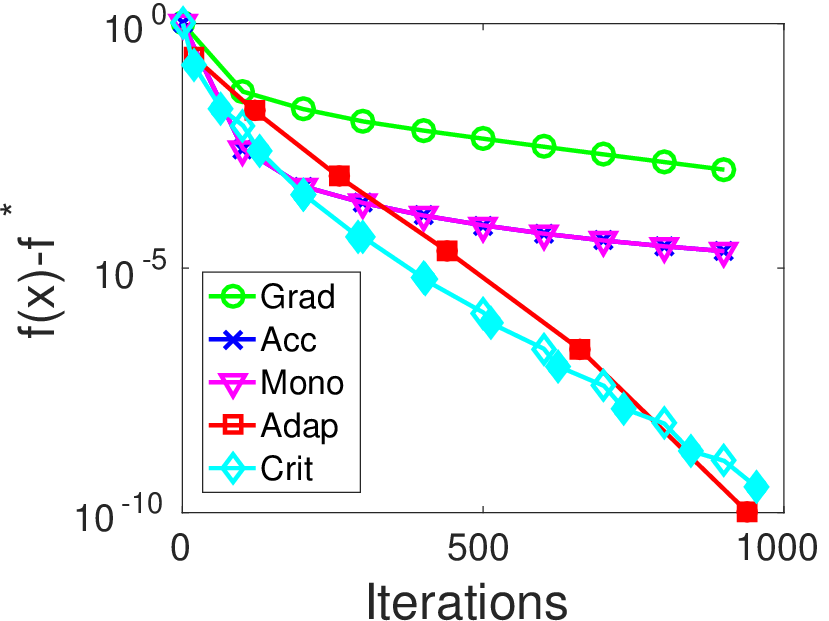}
		\caption{Comparison of the methods for the LASSO problem on Sonar dataset where number of iterations of the Adaptive method is multiplied by the size of the grid. Grid search step size is set to 4.}
	\end{center}
\end{figure}

	\section*{Acknowledgments}
	AA is at CNRS, attached to the D\'epartement d'Informatique at \'Ecole Normale Sup\'erieure in Paris, INRIA-Sierra team, PSL Research University. VR is in the Statistics Department of the University of Washington, working for the Algorithmic Foundations of Data Science Institute. The authors would like to acknowledge support from the {\em fonds AXA pour la recherche}, a gift from Soci\'et\'e G\'en\'erale Cross Asset Quantitative Research, an AMX fellowship and a Google focused award.
	
	\bibliographystyle{agsm}
	\bibliography{bib_restart}

	\appendix	
	\section{Algorithms \& Complexity Bounds} \label{sec:algos}
	We present here the algorithms that we restart. In particular we present a simplified version of the universal fast-gradient method of~\citet{Nest15} and shows how we can enforce monotonicity of the objective values, while keeping the optimal rate. The classical accelerated algorithm for smooth convex function is then derived as a special case of the universal method. 

In both cases the smoothness constant do not need to be known in advance. A line-search is provided whose complexity can be bounded as done in~\citep{Nest15}. In practice when restarting the algorithm we use the last smoothness constant provided by the algorithm.

\subsection{Problem formulation}
We focus on composite optimization problems of the form
\BEQ\label{eq:pb_composite_algo}
\mbox{minimize}\quad  f(x) = \phi(x) + g(x)
\EEQ
where $\phi,g$ are proper lower semi-continuous convex functions on $\reals^ n$ and $\phi$ is defined on an open-set containing $\dom g$, i.e. $\dom f = \dom g$. We denote $\|\cdot\|$ a given norm on $ \reals^ n$ and $\|\cdot\|_*$ the dual norm of $\|\cdot\|$.

We assume that $\phi$ is $s$-smooth with respect to $\|\cdot\|$  for a given $s \in [1, 2]$, i.e.  that there exists $L>0$ such that
\begin{equation}\label{eq:holder_smooth}
	\|\nabla \phi(x) - \nabla \phi(y)\|_* \leq L\|x-y\|^{s-1}
\end{equation}
for all $x,y \in \dom f$ and any $\nabla \phi(x) \in \partial \phi(x), \nabla \phi(y) \in \partial \phi(y)$.
We assume that we have access to a function $h$, differentiable on its domain $\dom h\supseteq \dom f$,  strongly convex with respect to the norm $\|\cdot\|$ with convexity parameter equal to one, i.e. 
\[ h(y) \geq h(x) + \nabla h(x)^T(y-x) + \frac{1}{2} \|x-y\|^2, \quad \mbox{for any $x,y \in \dom h$}. \]
We define the Bregman divergence associated to $h$ as, for given $x,y\in \dom h$, 
\begin{equation}\label{eq:bregman}
D_h(y;x) = h(y) - h(x) -\nabla h(x)^T(y-x) \geq  \frac{1}{2} \|x-y\|^2 .
\end{equation}
Finally we assume that, for any  $x,y\in \dom f $ and $\lambda\geq 0$ we can solve 
\begin{equation}\label{eq:prox} 
\min_z y^Tz + g(z) + \lambda D_h(z;x) 
\end{equation}
either in a closed form or by some cheap computational procedure.
In the following, we denote for any $x,y\in \dom f$, 
\[
\ell_f(x; y) = \phi(y) + \nabla \phi(y)^ \top(x-y) + g(x) 
\]
where $\nabla \phi(y) \in \partial \phi(y)$ is any sub-gradient of $\phi$ at $y$. This partial linearization of the objective is convex and satisfies by convexity of $\phi$, 
\begin{equation}\label{eq:lower_bound}
\ell_f(x; y) \leq f(x) 
\end{equation}

\subsection{Universal fast gradient method}
Our simplified version of the Universal Fast Gradient method is presented in Algorithm~\ref{algo:universal}. Proposition~\ref{prop:universal_conv} shows the convergence of Algorithm~\ref{algo:universal}.  Proposition~\ref{prop:line_search} ensures that the line-searches terminate. The total number of oracle calls can be bounded as done by \citet{Nest15}, using a termination criterion, we only give the complexity in terms of iterations of the algorithm. Monotonicity is enforced by simply taking the best of the new and old iterate at each iteration.

\begin{proposition}\label{prop:universal_conv}
	Consider problem~\eqref{eq:pb_composite_algo} where $\phi$ satisfies~\eqref{eq:holder_smooth} with parameters $(s,L)$. Algorithm~\ref{algo:universal}, started at an initial point $x_0$ for a target accuracy $\epsilon$ and an initial estimate $L_0$, outputs after $t$ iterations a point $x_t$ such that
	\[
	f(x_{t}) - f^ * \leq \frac{\epsilon}{2} +  \frac{2^{\frac{5s-2}{s}} L^{\frac{2}{s}}D_h(x_0, X^*)}{\epsilon^\frac{2}{s}t^{\frac{2\rho}{s}}}\frac{\epsilon}{2} \quad \mbox{and} \quad f(x_t) \leq f(x_0),
	\]
	where $D_h(x,X^*) = \min_{x^* \in X^*} D_h(x^*; x)$ with $X^* = \argmin_x f(x)$.
\end{proposition}
\begin{proof}
	Monotonicity of the objective values is ensured by~\eqref{eq:monotonicity}. We fix in the following $x \in \dom f$. 
	Consider the $k$ \textsuperscript{th} iteration of Algorithm~ \ref{algo:universal} for $k\geq 1$. We have
	\begin{align*}
		f(x_{k}) \stackrel{\eqref{eq:monotonicity}}{\leq} f(\tilde x_{k}) & \stackrel{\eqref{eq:suff_decrease}}{\leq} \ell_f(\tilde x_{k}; y_{k-1}) + \frac{ L_k}{2}\|\tilde x_{k} - y_{k-1}\|^ 2 + \frac{\epsilon\theta_k}{2} \\
		& \stackrel{(i)}{\leq} (1-\theta_k) \ell_f(x_{k-1}; y_{k-1}) + \theta_k \ell_f(z_{k}; y_{k-1}) + \frac{ L_k \theta_k^ 2}{2} \|z_{k}-z_{k-1}\|^ 2 + \frac{\epsilon\theta_k}{2}\\
		& \stackrel{\eqref{eq:bregman}}{\leq} (1-\theta_k) \ell_f(x_{k-1}; y_{k-1}) + \theta_k \left(\ell_f(z_{k}; y_{k-1}) +  L_k \theta_k D_h(z_{k}; z_{k-1})\right) + \frac{\epsilon\theta_k}{2} \\
		& \stackrel{\eqref{eq:three_point}}{\leq}(1-\theta_k) \ell_f(x_{k-1}; y_{k-1}){  + }\theta_k \left(\ell_f(x; y_{k-1}){ +  }L_k \theta_k[D_h(x; z_{k-1}) -D_h(x; z_{k})]\right){ + }\frac{\epsilon\theta_k}{2} \\
		& \stackrel{\eqref{eq:lower_bound}}{\leq} (1-\theta_k)f(x_{k-1}) + \theta_k f(x) +  L_k \theta_k^ 2[D_h(x; z_{k-1}) -D_h(x; z_{k})]+ \frac{\epsilon\theta_k}{2}
	\end{align*}
	where in $(i)$ we used that $\tilde x_{k} = (1-\theta_k)x_{k-1} + \theta_k z_{k}$, the convexity of $\ell_f(\cdot; y_{k-1})$ and $\tilde x_{k} -y_{k-1} = \theta_k(z_{k}-z_{k-1})$. 
	Subtracting $f(x)$ on both sides and dividing by $ L_k\theta_k^2$, we get
	\begin{align}
	\frac{1}{ L_k \theta_k^2}(f(x_{k}) - f(x)) &\leq \frac{1-\theta_k}{ L_k \theta_k^2} (f(x_{k-1}) - f(x)) + D_h(x; z_{k-1}) -D_h(x; z_{k}) + \frac{\epsilon}{2 L_k\theta_k} \nonumber 
	\end{align}
	If $k=1$, we have, using the initialization $\theta_1=1$, $z_0 = x_0$,
	\begin{equation} \label{eq:telescoping_one}
	\frac{1}{ L_1 \theta_1^2}(f(x_{1}) - f(x)) \leq  D_h(x; x_0) -D_h(x; z_{1}) + \frac{\epsilon}{2 L_1\theta_1}
	\end{equation}
	Otherwise we have using the definition of $\theta_k$ in \eqref{eq:update_theta},
	\begin{equation}
\frac{1}{ L_k \theta_k^2}(f(x_{k}) - f(x))  \leq \frac{1}{L_{k-1} \theta_{k-1}^2}(f(x_{k-1}) - f(x)) +  D_h(x; z_{k-1}) -D_h(x; z_{k}) + \frac{\epsilon}{2   L_k \theta_k} \label{eq:telescoping}
	\end{equation}
	Using inequality~\eqref{eq:telescoping} recursively from $k$ to $2$ and inequality~ \eqref{eq:telescoping_one} for $k=1$ we get
	\begin{align*}
		\frac{1}{ L_k \theta_k^2}(f(x_{k}) - f(x)) &\leq D_h(x; x_0) -D_h(x; z_{k+1}) + \sum_{j=1}^{k} \frac{\epsilon}{2 L_j \theta_j} \stackrel{\eqref{eq:sum_theta}}{=} D_h(x; x_0) -D_h(x; z_{k+1}) +\frac{\epsilon}{2 L_k \theta_k^2}
	\end{align*}
	Finally a bound on $L_k \theta_k^2$ can be found by combining Proposition~\ref{prop:line_search} and Proposition~\ref{prop:universal_conv} such that 
	$
	L_k\theta_k^2 \leq \frac{2^{\frac{5s-2}{s}} L^{\frac{2}{s}}}{\epsilon^\frac{2}{s}k^{\frac{3s-2}{s}}}\frac{\epsilon}{2}
	$
	and we get, denoting $\rho = \frac{3s-2}{2}$,
	\begin{equation}\label{eq:bound_univ_x}
	f(x_{k}) - f(x) \leq \frac{\epsilon}{2} +  \frac{2^{\frac{5s-2}{s}} L^{\frac{2}{s}}D_h(x;x_0)}{\epsilon^\frac{2}{s}k^{\frac{2\rho}{s}}}\frac{\epsilon}{2}.
	\end{equation}
	Taking $x\in \argmin_{x\in X^*} D_h(x; x_0)$ concludes the proof.
\end{proof}

\begin{proposition}\label{prop:line_search}
	Consider problem~\eqref{eq:pb_composite_algo} where $\phi$ satisfies~\eqref{eq:holder_smooth} with parameters $(s,L)$. The line-searches of Algorithm~\ref{algo:universal} terminate with 
	\[
	L_k \leq  2\epsilon^\frac{s-2}{s} L^{\frac{2}{s}}\theta_k^\frac{s-2}{s}
	\]
\end{proposition}
\begin{proof}
	Lemma~\ref{lem:holder} ensures that the line-search for $x_1$ stops for 
	$	\hat L_1 \geq  \epsilon^\frac{s-2}{s} L^{\frac{2}{s}}.
	$
	Therefore we have $L_1 \leq 2\epsilon^\frac{s-2}{s} L^{\frac{2}{s}}$.
	For $k\geq 2$, during the line-search procedure, the parameter $\theta_k$ reads, denoting $a = \theta_{k-1}^2 L_{k-1}/\hat L_{k} $,
	$
	\theta_k(\hat L_k) = \frac{-a + \sqrt{a^ 2+ 4a}}{2} = \frac{2}{1+\sqrt{1+4/a}} = \frac{2}{1+ \sqrt{1+4\hat L_k/(\theta_{k-1}^2L_{k-1})}}
$.
	Using again Lemma~\ref{lem:holder}, the stopping criterion~\eqref{eq:suff_decrease} is then ensured if there exists $\hat L_k$ such that 
	\begin{equation}\label{eq:line_search_crit}
	\hat L_k \geq  \left(\theta_k(\hat L_k) \epsilon\right)^{\frac{s-2}{s}}L^{\frac{2}{s}} 
	= (2\epsilon)^\frac{s-2}{s} L^{\frac{2}{s}}\left(1+ \sqrt{1+4\hat L_k/(\theta_{k-1}^2L_{k-1})}\right)^{\frac{2-s}{s}},
	\end{equation}
	Denote $c: x \rightarrow x - \alpha(1+\sqrt{1+ \beta x})^ \gamma$ for $\alpha > 0, \beta> 0, 0 \leq \gamma \leq 1$. We have $\lim_{x\rightarrow +\infty} c(x) = +\infty$. Therefore there exists $\hat L_k>0$ satisfying~\eqref{eq:line_search_crit}. Moreover the line-search terminates with $L_k \leq 2 \left(\theta_k \epsilon\right)^{\frac{s-2}{s}}L^{\frac{2}{s}}$ as otherwise,
	$
	L_k/2 >  \left(\theta_k(L_k) \epsilon\right)^{\frac{s-2}{s}}L^{\frac{2}{s}} \geq \left(\theta_k(L_k/2) \epsilon\right)^{\frac{s-2}{s}}L^{\frac{2}{s}},
	$
	and the line-search would have stopped before.
\end{proof}
\begin{algorithm}[t]
	\begin{algorithmic}[1]\caption{Simplified Universal Fast Gradient Method $x = \mathcal{U}(x_0, t, \epsilon)$\label{algo:universal}}
		\State{{\bf Problem oracles:} Convex functions $\phi, g$, first-order oracles $(x,y, \lambda) \rightarrow \argmin_z y^Tz + g(z) + \lambda D_h(z;x)$ and $x \rightarrow \nabla \phi(x)$ with $\nabla \phi(x) \in \partial \phi(x)$ }
		\State{{\bf Inputs:} Initial point $x_0$, number of iterations $t$, target accuracy $\epsilon$, smoothness estimate $L_0$}
		\State{\textbf{Initialize:} $z_0 = x_0, \theta_1 = 1$}
		\For{$k=1,\ldots, t$}
		\State{Initialize line-search by $\hat L_k = L_{k-1}/2$}
		\Repeat
		\If{$k>1$}
		\State{Compute $\theta_k \geq 0$ s.t.
			\begin{equation}\label{eq:update_theta}
			\frac{1 -\theta_{k}}{\hat L_k\theta_{k}^2} = \frac{1}{L_{k-1}\theta_{k-1}^2}
			\end{equation}
			}
		\EndIf
		\State{Compute
		\begin{align*}
		y_{k-1} & = (1-\theta_k) x_{k-1} + \theta_k z_{k-1} \\
		z_{k} & = \argmin_z \ell_f(z; y_{k-1}) + \hat L_k \theta_{k} D_h(z; z_{k-1}) \\
		\tilde x_{k} & = (1-\theta_{k})x_{k-1} + \theta_k z_{k}
		\end{align*}}
		\IIf{$f(\tilde x_{k}) > \ell_f(\tilde x_{k}; y_{k-1}) + \frac{\hat L_k}{2}\|\tilde x_{k}-y_{k-1}\|^2 + \frac{\theta_k \epsilon}{2}$}{ $\hat L_k\gets 2\hat L_k$}\EndIIf
		\Until{\begin{equation}\label{eq:suff_decrease}
				f(\tilde x_{k}) \leq  \ell_f(\tilde x_{k}; y_{k-1}) + \frac{\hat L_k}{2}\|\tilde x_{k}-y_{k-1}\|^2 + \frac{\theta_k \epsilon}{2}
			\end{equation}}
			\State{Pick any $x_{k}$ such that 
				\begin{equation}\label{eq:monotonicity}
					f(x_{k}) \leq \min(f(\tilde x_{k}), f(x_{k-1}))
				\end{equation}}
			\State{Update  $L_{k} = \hat L_k$}
		\EndFor
		\State{{\bf Output:} $x_t$}
	\end{algorithmic}
\end{algorithm}

\subsection{Accelerated algorithm}
The accelerated algorithm is obtained as a special case of the universal fast gradient algorithm for $s=2$ and a choice of $\epsilon=0$, i.e., $\mathcal{A}(x_0, t) = \mathcal{U}(x_0, t, 0)$.  Its rate follows from the one given by the universal fast gradient method as recalled below.
\begin{corollary}
	Consider problem~\eqref{eq:pb_composite_algo} where $\phi$ satisfies~\eqref{eq:holder_smooth} with parameters $(2,L)$. Algorithm~\ref{algo:universal} with $\epsilon=0$, started at an initial point $x_0$ with an initial estimate $L_0$, outputs after $t$ iterations a point $x_t$ such that
	\[
	f(x_{t}) - f^ * \leq \frac{8 L }{t^2} D_h(x_0, X^*)\quad \mbox{and} \quad f(x_t) \leq f(x_0),
	\]
	where $D_h(x,X^*) = \min_{x^* \in X^*} D_h(x^*; x)$ with $X^* = \argmin_x f(x)$.
\end{corollary}

\subsection{Lemmas for proving convergence}
\begin{lemma}[{\citep[Lemma 2]{Nest15}}]\label{lem:holder}
	Let $\phi$ satisfying~\eqref{eq:holder_smooth}. Then for any $\delta>0$ and 
	$
	\hat L \geq \left(\frac{2-s}{s} \frac{1}{\delta}\right)^{\frac{2-s}{s}}L^{\frac{2}{s}},
	$
	or a fortiori any 
		$
	\hat L \geq \delta^\frac{s-2}{s} L^{\frac{2}{s}},
	$
	we have, for any $x,y \in \dom \phi$ and $\nabla \phi(x) \in \partial \phi(x)$,
	\begin{equation}
	\phi(y) \leq \phi(x) + \nabla \phi(x)^ \top (y-x) + \frac{\hat L}{2} \|x-y\|^2 + \frac{\delta}{2}. 
	\end{equation}
\end{lemma}
\begin{lemma}[{\citep[Property 1]{Tsen08}}]
	For any proper l.s.c. convex function $\psi$, and any $z \in\dom h$, denote
	$
	z_+ = \argmin_x \{ \psi(x) + D_h(x; z)\}.
$
	Then for any $x \in \dom h$, 
	\begin{equation}\label{eq:three_point}
	\psi(x) + D_h(x;z) \geq \psi(z_+) + D_h(z_+; z) + D_h(x; z_+).
	\end{equation}
\end{lemma}
\begin{lemma}
	Consider two sequences $(L_k)_{k\geq 1}, (\theta_k)_{k \geq 1}$ initialized by $L_1 > 0, \theta_1 = 1$ and satisfying
	\begin{equation}\label{eq:upper_L}
	\mbox{for $k\geq 1$} \qquad L_k \leq \frac{\alpha}{\theta_k^\beta}, \qquad \mbox{and, for $k\geq 2$,} \qquad \frac{1 -\theta_{k}}{L_k\theta_{k}^2} = \frac{1}{L_{k-1}\theta_{k-1}^2},
	\end{equation}
	for some $\alpha \geq 0, \beta \in [0,1]$, with $\theta_k \geq 0$.
	Then for any $k\geq 1$,
	\begin{equation}\label{eq:sum_theta}
	\frac{1}{L_k \theta^2_k} = \sum_{j=1}^ k \frac{1}{L_j \theta_j} \qquad \mbox{and} \qquad
	 L_k\theta^2_k \leq \frac{\alpha 2^{2-\beta}}{k^{2-\beta}} 
	\end{equation}
\end{lemma}
\begin{proof}
	The first property of~\eqref{eq:sum_theta} is true for $k=1$ since $\theta_1 = 1$.
	The definition of $\theta_k$ for $k\geq2$ reads then
	$
	\frac{1}{L_k \theta_k^2} = \frac{1}{L_{k-1} \theta_{k-1}^2} + \frac{1}{L_k \theta_k}
	$
	which shows the first property~\eqref{eq:sum_theta} by induction.
	
	For $k\geq 1$, denote $a_k = \frac{1}{L_k \theta_k}$ and $A_k = \sum_{j=1}^k a_j$, such that 
	$A_k =\frac{1}{L_k \theta_k^2} =L_k a_k^2$. We have from~\eqref{eq:upper_L}, $L_k^{1-\beta} \leq \alpha a_k^{\beta}$.
	Therefore $A_k^{1-\beta} \leq \alpha a_k^{2-\beta}$ which gives
$
	     A_k^{\frac{1-\beta}{2-\beta}} \leq \alpha^{\frac{1}{2-\beta}} a_k.
$
	Denote $\gamma = \frac{1}{2-\beta}$ and $A_0=0$. Since $\gamma \geq 1/2$ and $A_{k} \geq A_{k-1}$, we have for any $k\geq 1$,
	\begin{align*}
	    A_{k}^{\gamma} - A_{k-1}^\gamma \geq \frac{A_{k} - A_{k-1}}{A_{k}^{1-\gamma} + A_{k-1}^{1-\gamma}} \geq \frac{A_{k} - A_{k-1}}{2A_{k}^{1-\gamma}} \geq \frac{1}{2\alpha^{\frac{1}{2-\beta}}}.
	\end{align*}
	Therefore we conclude that 
	$
	    A_k \geq \frac{k^{\frac{1}{\gamma}}}{2^{\frac{1}{\gamma}} \alpha} = \frac{k^{2-\beta}}{2^{2-\beta} \alpha}.
	$
\end{proof}

\subsection{Smoothing non-smooth problems}
We present here the smoothing algorithm used in Section~\ref{sec:compobreg}. We recall the problem
\begin{equation}
\mbox{minimize} \quad f(x) \triangleq \phi(Ax) + g(x)
\end{equation}
where $A \in \reals^{m\times n}$, $g$ is a simple convex function and $\phi$ is a non-smooth convex function whose inf-convolution with some smooth convex function $\psi$  can be computed analytically, i.e. one has access for any $\mu>0$ to 
\[
\phi_{\mu \psi^\star}(x) = \inf_y \left\{\phi(y) + \mu\psi\left(\frac{x-y}{\mu}\right) \right\} =\sup_{u} \left\{u^\top x - \phi^\star(u) - \mu \psi^\star(u)\right\},
\]
where for a function $f$ we denote by $f^\star$ its convex conjugate, see \citep{Beck12} for a detailed exposition. For $\psi^\star$ 1-strongly convex  w.r.t. a given norm $\|\cdot\|_\beta$ (i.e. $\psi$ 1-smooth w.rt. $\|\cdot\|_\beta^*$), the function $\phi_{\mu \psi^\star}$ is $1/\mu$ smooth w.r.t. the norm $\|\cdot\|_\beta^*$. Moreover it approximates $\phi$ as (see e.g. \citep[Proposition 41]{pillutla2019smoother})
\begin{equation}\label{eq:smoothing_approx}
\mu \inf_{u \in \dom \phi^\star}	\psi^\star(u) \leq \phi(x) - \phi_{\mu \psi^\star}(x) \leq  \mu \sup_{u \in \dom \phi^\star}	\psi^\star(u) \quad \mbox{ for any $x\in \dom \phi$}.
\end{equation}
The smoothed objective is a composite objective as in~\eqref{eq:pb_composite_algo}, i.e.,
\begin{equation}\label{eq:smoothed_pb}
f_{\mu \psi^\star}(x) = \phi_{\mu \psi^\star}(Ax) + g(x)
\end{equation}
where, denoting $\|A\|_{\alpha, \beta} = \sup_{ \|x\|_\alpha \leq 1, \|u\|_\beta \leq 1} u^\top A x$, we have that $x \rightarrow \phi_{\mu \psi^\star}(Ax)$ is $\|A\|_{\alpha, \beta}^2/\mu$ smooth w.r.t. a norm $\|\cdot\|_\alpha$ (see e.g. \citep[Lemma 42]{pillutla2019smoother}). We can then apply the accelerated algorithm on~\eqref{eq:smoothed_pb} with a potential function $h$ strongly convex w.r.t. the norm $\|\cdot\|_\alpha$, assuming that $g$ is simple such that we have access to oracles of the form~\eqref{eq:prox_g}. 
Precisely, given an initial point $x_0$ and a target accuracy $\epsilon$, by applying the accelerated algorithm on~\eqref{eq:smoothed_pb} with $\mu = \epsilon/(2D)$, where $D =  \sup_{u \in \dom \phi^*}	\psi^*(u) - \inf_{u \in \dom \phi^*}\psi^*(u)$, we get after $t$ iterations a point $\tilde x$ such that for $x^* \in \argmin f$,
\[
f(\tilde x) - f^* \stackrel{\eqref{eq:smoothing_approx}}{\leq} f_{\mu \psi^\star}(\tilde x) - f_{\mu \psi^\star}(x^*) + D\mu \stackrel{(i)}{\leq} \frac{\epsilon}{2} + \frac{16D\|A\|_{\alpha, \beta}^2 }{\epsilon t^2} D_h(x^*; x_0).
\]
where in $(i)$ we use the definition of $\mu$ and the convergence bound~\eqref{eq:bound_univ_x} of the accelerated algorithm ($\epsilon=0$) applied on $x=x^*$.
We denote then $x = \mathcal{S}(x_0, \epsilon, t)$ any point such that $f(x) \leq \min\{f(\tilde x), f(x_0)\}$ such that it satisfies both the rate above and belongs to the initial sub-level set. The bound presented in equation~\eqref{eq:smoothing_rate} is obtained by taking $x^*\in \argmin_{x\in X^*} D_h(x; x_0)$ and defining $c=32$ and $L_{\psi^*, A} = \sqrt{D}\|A\|_{\alpha, \beta}$.
	
	\section{Rounding issues} \label{sec:total_number_approx}
	We presented convergence bounds for real sequences of iterate counts $(t_k)_{k=1}^\infty$ but in practice these are integer sequences. The following Lemma details the convergence of our schemes for an approximate choice $\tilde{t}_k= \lceil t_k \rceil$.

\begin{lemma}
	Let $x_k$ be a sequence whose $k\textsuperscript{th}$ iterate is generated from previous one by an algorithm that needs $t_k$ iterations and denote $N = \sum_{k=1}^R t_k$ the total number of iterations to output a point $\hat{x} = x_R$.
	Suppose setting 
	$
	t_k = \lceil Ce^{\alpha k} \rceil, \quad k=1,\ldots,R
	$
	for some $C>0$ and $\alpha\geq 0$  ensures that objective values $f(x_k)$ converge linearly, i.e.
	\BEQ \label{eq:conv_obj_approx}
	f\left(x_k\right) -f^* \leq \nu e^{-\gamma k},
	\EEQ
	for all $k\geq 0$ with $\nu\geq 0$ and $\gamma \geq 0$. Then precision at the output is given by,
	\[
	f(\hat{x}) -f^* \leq \nu \exp(-\gamma N/(C+1)),\quad \mbox{when $\alpha=0$,}
	\]
	and, denoting $N' = N - \frac{\log\left( (e^\alpha-1)e^{-\alpha}C^{-1}N +1\right)}{\alpha}$,
	\[
	f(\hat{x}) -f^* \leq \frac{\nu}{\left(\alpha e^{-\alpha} C^{-1}N'+1\right)^\frac{\gamma}{\alpha}},\quad \mbox{when $\alpha>0$.} 
	\]
\end{lemma}
\begin{proof}
	At the $R\textsuperscript{th}$ point generated, $N = \sum_{k=1}^R t_k$. If $t_k = \lceil C \rceil$, define $\epsilon = \lceil C \rceil -C$ such that $0 \leq \epsilon <1$. Then $ N = R(C+\epsilon)$, injecting it in \eqref{eq:conv_obj_approx} at the $R\textsuperscript{th}$ point,
	\[
	f(\hat{x}) -f^* \leq \nu e^{-\gamma \frac{N}{C+\epsilon}} \leq \nu e^{-\gamma \frac{N}{C+1}}.
	\]
	
	Now, if $t_k = \lceil Ce ^{\alpha k} \rceil$, define $\epsilon_k = \lceil Ce ^{\alpha k} \rceil -C e ^{\alpha k}$, such that $0 \leq \epsilon_k <1$. On one hand
	$
	N \geq \sum_{k=1}^R C e ^{\alpha k},
	$
	such that 
	$
	R \leq \frac{\log\left( (e^\alpha-1)e^{-\alpha}C^{-1}N +1\right)}{\alpha}.
	$
	On the other hand, 
	\BEAS
	N = \sum_{k=1}^R t_k  =  \frac{Ce^\alpha}{e^\alpha-1}(e^{\alpha R}-1) + \sum_{k=1}^R \epsilon_k 
	& \leq & \frac{Ce^\alpha}{e^\alpha-1}(e^{\alpha R}-1) + R \\
	& \leq &  \frac{Ce^\alpha}{e^\alpha-1}(e^{\alpha R}-1) + \frac{\log\left( (e^\alpha-1)e^{-\alpha}C^{-1}N +1\right)}{\alpha},
	\EEAS
	such that
	$
	R \geq \frac{\log\left( \alpha e^{-\alpha}C^{-1}N' +1\right)}{\alpha}.
	$
	Injecting it in \eqref{eq:conv_obj_approx} at the $R\textsuperscript{th}$ point the result follows.
\end{proof}
	
	\section{Grid-search for universal restart schemes}\label{sec:universal_grid_search}
	We briefly explain why a grid-search on the parameters for the general case $s\in[1, 2]$ does not provide near-optimal bounds.
Consider general restart schemes as presented in Algorithm~\ref{algo:scheduled_gen} for a function $f \in \mathcal{H}_{s,L} \cap \mathcal{L}_{r, \mu}(K)$ with $K$ the initial sublevel set of $f$ at a given $x_0$. Assume that the decreasing factor is $\gamma$ and the schedules have the form 
$
t_k = Ce^{\alpha k}
$
such that 
\[
t_k \geq e^{\gamma\frac{1-\tau}{\rho}}(c \kappa)^{\frac{s}{2\rho}} \epsilon_0^{-\frac{\tau}{\rho}}  e^{ \frac{\gamma \tau}{\rho} k },
\]
which is $C \geq  e^{\gamma\frac{1-\tau}{\rho}}(c \kappa)^{\frac{s}{2\rho}} \epsilon_0^{-\frac{\tau}{\rho}}$ and $\alpha \geq \frac{\gamma \tau}{\rho}$.
Consider the case $\tau >0$. Then following the proof of Proposition~\ref{th:sched_gen}, we get that 
$
f(x_R) - f^* \leq \gamma^k \epsilon_0
$
and applying Lemma~\ref{lem:total_number} we obtain a convergence rate
\[
f(x_R) - f^* \leq \frac{\epsilon_0}{\left(\alpha e^{-\alpha} C^{-1}N +1\right)^\frac{\gamma}{\alpha}},
\]
where $N$ is the total number of iterations.
For this rate to be optimal or nearly optimal we need $\frac{\gamma}{\alpha} = \frac{\rho}{\tau}$. Any grid search on this ratio will then suffer from a constant factor such that we won't get a rate of the form $N^{-\frac{\rho}{\tau}}$ except if we know $r$ and $s$ which gives us $\rho/\tau$.
	
\end{document}